\newtheorem{theorem}{Theorem}
\newtheorem{lemma}[theorem]{Lemma}
\newtheorem{proposition}[theorem]{Proposition}
\theoremstyle{definition}
\newtheorem{remark}[theorem]{Remark}
\newtheorem*{remark*}{Remark}
\renewcommand{\epsilon}{\varepsilon}
\newcommand{\eps}{\varepsilon}
\renewcommand{\leq}{\leqslant}
\renewcommand{\geq}{\geqslant}
\renewcommand{\dim}{n}
\newcommand{\bits}{\{0,1\}^{\dim} }
\newcommand{\A}{\mathcal{A}}
\newcommand{\norm}[1]{\|#1\|}
\newcommand{\ceil}[1]{\lceil #1 \rceil}
\newcommand{\floor}[1]{\lfloor #1 \rfloor}
\newcommand{\er}{\mathsf{e}_{\leq r}}
\newcommand{\Er}{\mathsf{E}_{\leq r}}
\newcommand{\calU}{{\cal U}}
\newcommand{\B}{\mathcal{B}}
\newcommand{\jth}{j^{\mathrm{th}}}
\renewcommand{\b}{\beta}
\newcommand{\HLinkShort}[2]{\hyperref[#2]{#1\ref*{#2}}}
\newcommand{\HLink}[2]{\hyperref[#2]{#1~\ref*{#2}}}
\newcommand{\HLinkPage}[2]{\hyperref[#2]{#1~\ref*{#2}%
		$_\text{p\pageref{#2}}$}}
\newcommand{\HLinkPageOnly}[1]{\hyperref[#1]{Page~\refpage*{#1}%
		$_\text{p\pageref{#1}}$}}
\newcommand{\HLinkSuffix}[3]{\hyperref[#2]{#1\ref*{#2}{#3}}}
\newcommand{\HLinkPageSuffix}[3]{\hyperref[#2]{#1\ref*{#2}%
		#3$_\text{p\pageref{#2}}$}}
\newcommand{\seclab}[1]{\label{section:#1}}
\newcommand{\secref}[1]{\HLink{Section}{section:#1}}
\newcommand{\lemlab}[1]{\label{lemma:#1}}
\newcommand{\lemref}[1]{\HLink{Lemma}{lemma:#1}}%
\newcommand{\proplab}[1]{\label{prop:#1}}
\newcommand{\propref}[1]{\HLink{Proposition}{prop:#1}}
\newcommand{\thmlab}[1]{{\label{theo:#1}}}
\newcommand{\thmref}[1]{\HLink{Theorem}{theo:#1}}
\providecommand{\eqlab}[1]{}%
\renewcommand{\eqlab}[1]{\label{equation:#1}}
\newcommand{\Eqref}[1]{\HLinkSuffix{Eq.~(}{equation:#1}{)}}
\title{Edge Isoperimetric Inequalities for Powers of the Hypercube}
\author{Cyrus Rashtchian\\
\small Dept.~of Computer Science \& Engineering\\[-0.8ex]
\small UC San Diego\\[-0.8ex] 
\small La Jolla, CA, U.S.A.\\
\small\tt \url{crashtchian@eng.ucsd.edu}\\
\and
William Raynaud\\
\small School of Mathematical Sciences\\[-0.8ex]
\small Queen Mary, University of London\\[-0.8ex]
\small London, England, U.K.\\
\small\tt \url{wgraynaud@gmail.com}}
\begin{document}

\maketitle

%% paper content

\begin{abstract}
 For positive integers $n$ and $r$, we let $Q_n^r$ denote the $r$th power of the $n$-dimensional discrete hypercube graph, i.e., the graph with vertex-set $\{0,1\}^n$, where two 0-1 vectors are joined if they are Hamming distance at most $r$ apart. We study edge isoperimetric inequalities for this graph. Harper, Bernstein, Lindsey and Hart proved a best-possible edge isoperimetric inequality for this graph in the case $r=1$. For each $r \geq 2$, we obtain an edge isoperimetric inequality for $Q_n^r$; our inequality is tight up to a constant factor depending only upon $r$. Our techniques also yield an edge isoperimetric inequality for the `Kleitman-West graph' (the graph whose vertices are all the $k$-element subsets of $\{1,2,\ldots,n\}$, where two $k$-element sets have an edge between them if they have symmetric difference of size two); this inequality is sharp up to a factor of $2+o(1)$ for sets of size $\binom{n -s}{k-s}$, where $k=o(n)$ and $s \in \mathbb{N}$.
\end{abstract}

\section{Introduction}
Isoperimetric questions are classical objects of study in mathematics. In general, they ask for the minimum possible `boundary-size'
of a set of a given `size', where the exact meaning of these words varies according to the problem. A classical example of an isoperimetric problem is to minimise the perimeter among all shapes in the plane with unit area. The solution to this problem was `known' to the Ancient Greeks, but the first rigorous proof was given by Weierstrass in a series of lectures in Berlin in the 1870s.

In the last fifty years, there has been a great deal of interest in {\em discrete} isoperimetric inequalities. These deal with the boundaries of sets of vertices in graphs. If $G = (V,E)$ is a graph, and $A \subset V(G)$ is a subset of vertices of $G$, the {\em edge boundary} of $A$ consists of the set of edges of $G$ which join a vertex in $A$ to a vertex in $V(G) \setminus A$; it is denoted by $\partial_{G}(A)$, or by $\partial A$ when the graph $G$ is understood. The {\em edge isoperimetric problem for $G$} asks for a determination of $\min\{|\partial A|:\ A \subset V(G),\ |A|=m\}$, for each integer $m$. 

If $G = (V,E)$ is a graph and $A \subset V(G)$, we write $e_G(A)$ for the number of edges of $G$ induced by $A$, i.e., the number of edges of $G$ that join two vertices in $A$. We remark that if $G$ is a regular graph, then the edge isoperimetric problem for $G$ is equivalent to finding the maximum possible number of edges induced by a set of given size. Indeed, if $G$ is a $d$-regular graph, then
\begin{equation}\label{eq:reg} 2e_G(A) + |\partial A| = d|A|\end{equation}
for all $A \subset V(G)$.

An important example of a discrete isoperimetric problem is the edge isoperimetric problem for the Hamming graph $Q_n$ of the $n$-dimensional hypercube. We define $Q_n$ to be the graph with vertex-set $\{0,1\}^n$, where two 0-1 vectors are adjacent if they differ in exactly one coordinate. This isoperimetric problem has numerous applications, both to other problems in mathematics, and in other areas such as distributed algorithms~\cite{beame2017massively, sarma2013upper}, communication complexity~\cite{harper}, network science~\cite{bezrukov} and game theory~\cite{hart}.

The edge isoperimetric problem for $Q_n$ has been solved by Harper \cite{harper}, Lindsey~\cite{lindsey}, Bernstein~\cite{bernstein} and Hart~\cite{hart}. Let us describe the solution. The {\em binary ordering} on $\{0,1\}^n$ is defined by $x < y$ if and only if $\sum_{i=1}^{n} 2^{i-1} x_i < \sum_{i=1}^{n} 2^{i-1}y_i$. If $m \leq 2^n$, the {\em initial segment of the binary ordering on $\{0,1\}^n$ of size $m$} is simply the subset of $\{0,1\}^n$ consisting of the $m$ smallest elements of $\{0,1\}^n$ with respect to the binary ordering. Note that if $m = 2^d$ for some $d \in \mathbb{N}$, then the initial segment of the binary ordering on $\{0,1\}^n$ of size $m$ is the $d$-dimensional subcube $\{x \in \{0,1\}^n:\ x_i = 0 \ \forall i > d\}$.

Harper, Bernstein, Lindsey and Hart proved the following.
\begin{theorem}[The edge isoperimetric inequality for $Q_n$]
\label{thm:edge-iso}
If $\A \subset \{0,1\}^n$, then $|\partial \A| \geq |\partial \B|$, where $\B \subset \{0,1\}^n$ is the initial segment of the binary ordering of size $|\A|$.
\end{theorem}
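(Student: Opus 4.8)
The plan is to prove \thmref{edge-iso} by the standard \emph{compression} (or \emph{shifting}) technique, which reduces the problem to understanding initial segments directly. For each coordinate $i \in [n]$, define the $i$-compression $C_i$ acting on a set $\A \subset \{0,1\}^n$ as follows: partition $\A$ into pairs $\{x, x \oplus e_i\}$ together with the singletons whose $i$-flip is not in $\A$; for each pair, keep both points; for each singleton, replace $x$ by whichever of $x, x\oplus e_i$ has $x_i = 0$. One then checks two facts. First, $|C_i(\A)| = |\A|$. Second, and crucially, $e_{Q_n}(C_i(\A)) \geq e_{Q_n}(\A)$: edges in directions $j \neq i$ are not destroyed because $C_i$ acts as a bijection-respecting map on the $j$-direction matching (this is the usual "compression does not decrease the number of induced edges" lemma, proved by a local case analysis on the four points $x, x\oplus e_i, x\oplus e_j, x \oplus e_i \oplus e_j$), and edges in direction $i$ are obviously preserved in count since $C_i$ only moves unmatched points. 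Since $Q_n$ is $n$-regular, by \Eqref{reg} this is equivalent to $|\partial C_i(\A)| \leq |\partial \A|$.

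Next I would iterate: starting from any $\A$, repeatedly apply $C_i$ for $i = 1, 2, \ldots, n$ in rounds. A potential-function argument (e.g.\ the sum $\sum_{x \in \A} \sum_i 2^{i-1} x_i$ strictly decreases whenever some $C_i$ changes the set, and is bounded below) shows the process terminates in a set $\A^\ast$ with $|\A^\ast| = |\A|$, $|\partial \A^\ast| \leq |\partial \A|$, and $\A^\ast$ \emph{compressed}, meaning $C_i(\A^\ast) = \A^\ast$ for all $i$. So it suffices to prove the theorem under the extra hypothesis that $\A$ is compressed, i.e.\ that $\A$ is a \emph{downset} in a suitable sense: being fixed by every $C_i$ means precisely that whenever $x \in \A$ and $x_i = 1$, we have $x \oplus e_i \in \A$, so $\A$ is a downset in the coordinatewise partial order on $\{0,1\}^n$.

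The final and hardest step is to show that among all compressed sets of size $m$, the initial segment $\B$ of the binary ordering minimizes the edge boundary. I would do this by induction on $n$. Split the cube along the top coordinate: write $\A = \A_0 \times \{0\} \,\cup\, \A_1 \times \{1\}$ with $\A_1 \subseteq \A_0 \subseteq \{0,1\}^{n-1}$ (the inclusion $\A_1 \subseteq \A_0$ is exactly the downset condition in direction $n$). Then
\begin{equation*}
e_{Q_n}(\A) = e_{Q_{n-1}}(\A_0) + e_{Q_{n-1}}(\A_1) + |\A_1|.
\end{equation*}
One may further assume $\A_0$ and $\A_1$ are themselves initial segments of the binary ordering on $\{0,1\}^{n-1}$ (apply the induction hypothesis and the whole compression argument to each half, noting this preserves the nesting $\A_1 \subseteq \A_0$). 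So the task reduces to a one-dimensional optimization: writing $a = |\A_0|$, $b = |\A_1|$ with $a + b = m$ and $b \leq a$, maximize
\begin{equation*}
f(a,b) = g(a) + g(b) + b, \qquad \text{where } g(t) = \max\{e_{Q_{n-1}}(S) : |S| = t\}
\end{equation*}
is the known optimum in dimension $n-1$. The main obstacle is precisely this discrete optimization: one must show the maximum is attained at the split induced by the binary ordering on $\{0,1\}^n$ (namely $a = \min(m, 2^{n-1})$, $b = \max(0, m - 2^{n-1})$). This hinges on a convexity / supermodularity property of the function $g$ — roughly, that $g(t+1) - g(t)$ is nonincreasing, equivalently that adding vertices in binary order yields diminishing marginal gains in induced edges — which itself follows from the explicit description of $g$ (if $t = \sum 2^{d_i}$ with $d_1 > d_2 > \cdots$, then $g(t) = \sum_i (d_i \cdot 2^{d_i - 1} + i \cdot 2^{d_i})/\ldots$, or more cleanly $2g(t) = \sum_i d_i 2^{d_i} + 2\sum_i (i-1) 2^{d_i}$). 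Carefully verifying this concavity-type inequality, and checking the boundary cases $b = 0$ and $b = a$, is where the real work lies; everything before it is soft.
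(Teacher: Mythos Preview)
The paper does not prove this theorem at all; it is quoted as a known result of Harper, Bernstein, Lindsey and Hart, so there is no argument in the paper to compare against. Your outline is the classical compression route, and the first two stages (the $C_i$-compressions and the reduction to down-sets, with termination via the binary-weight potential) are fine. The gap is in your final step.

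Your key claim, that $g(t+1)-g(t)$ is nonincreasing, is false. The $(t+1)$th vertex in the binary ordering is the point with binary value $t$, and its neighbours already present in the initial segment of size $t$ are exactly those obtained by flipping one of its $1$-bits to $0$; hence $g(t+1)-g(t)=s_2(t)$, the number of ones in the binary expansion of $t$, which oscillates: $1,1,2,1,2,2,3,1,\ldots$. Concretely, with $m=8$ and $n-1\ge 3$ one gets
\[
f(4,4)=g(4)+g(4)+4=12,\qquad f(5,3)=g(5)+g(3)+3=10,\qquad f(8,0)=g(8)+0+0=12,
\]
so the function strictly \emph{drops} when moving from $(4,4)$ toward the extreme and only recovers at $(8,0)$. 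Thus the one-dimensional optimisation is not settled by any concavity or monotonicity, and the argument as written does not close.

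The standard repair is to compress harder from the outset. For each $i$ define $B_i(\A)$ by replacing the two $i$-sections of $\A$ by initial segments in $\{0,1\}^{n-1}$ of the same sizes, placing the larger one at $x_i=0$; the induction hypothesis together with $|\A_0\cap\A_1|\le\min(|\A_0|,|\A_1|)$ shows $e_{Q_n}(B_i(\A))\ge e_{Q_n}(\A)$. One then argues directly that a set stable under every $B_i$ is an initial segment (or agrees with one after a single explicit swap), which sidesteps the problematic optimisation entirely.
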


\noindent In particular, it follows from Theorem \ref{thm:edge-iso} that the minimum edge-boundary of a set of size $2^{d}$ is attained by a $d$-dimensional subcube, for any $d \in \mathbb{N}$. As another consequence, the above theorem implies that $e_{Q_n}(\A) \leq \frac{1}{2}|\A|\log_2 |\A|$ for all $\A \subset \bits$.

For background on other discrete isoperimetric inequalities, we refer the reader to the surveys of Bezrukov \cite{bezrukov} and of Leader \cite{leader-survey}.

In this paper, we consider the edge isoperimetric problem for {\em powers} of the hypercube. If $r,n \in \mathbb{N}$, we let $Q_n^r$ denote the $r$th power of $Q_n$, that is, the graph with vertex-set $\{0,1\}^n$, where two  distinct 0-1 vectors are joined by an edge if they differ in at most $r$ coordinates. Writing $[n]: = \{1,2,\ldots,n\}$, we may identify $\bits$ with the power-set $\mathcal{P}([\dim])$ via the natural bijection $x \leftrightarrow \{i \in [\dim]:\ x_i=1\}$. By doing so, we may alternatively  view $Q_n^r$ as the graph with vertex-set $\mathcal{P}([\dim])$, where two distinct subsets of $[n]$ are joined if their symmetric difference has size at most $r$. As usual, the {\em Hamming weight} of a vector $x \in \{0,1\}^n$ is its number of 1's; if $x,y \in \{0,1\}^n$, the {\em Hamming distance} between $x$ and $y$ is the number of coordinates on which they differ. Hence, two 0-1 vectors are adjacent in $Q_n^r$ if and only if they are Hamming distance at most $r$ apart. 

Note that $Q_n^r$ is a regular graph, so by (\ref{eq:reg}), the edge isoperimetric problem for $Q_n^r$ is equivalent to finding the maximum number of edges of $Q_n^r$ induced by a set of given size. In other words, it is equivalent to determining
$$D(m,n,r) := \max\{e_{Q_n^r}(\A):\ \A \subset \{0,1\}^n,\ |\A|=m\},$$
i.e.\ the maximum possible number of pairs of vectors at Hamming distance $r$ or less, among a set of $m$ vectors in $\{0,1\}^n$, for each $(m,n,r) \in \mathbb{N}^3$. We remark that, since $Q_n^r$ is regular of degree $\sum_{j=1}^{r} {n \choose j}$, one has the trivial upper bound
\begin{equation}\label{eq:triv} D(m,n,r) \leq \tfrac{1}{2}m \sum_{j=1}^{r} {n \choose j}\qquad \forall m,n,r \in \mathbb{N}.\end{equation}

In the light of Theorem \ref{thm:edge-iso}, which gives a complete answer to the isoperimetric problem for~$Q_n^r$ in the case $r=1$, it is natural to ask whether, for each $n \geq r \geq 2$, there exists an ordering of the vertices of $\{0,1\}^n$ such that initial segments of this ordering minimize the edge-boundary in $Q_n^r$, over all sets of the same size. Unfortunately, this is false even for~$r=2$. Indeed, this is easy to check when $r=2$ and $n = 4$, in which case the optimal isoperimetric sets of size~5 are precisely the Hamming balls of radius~1, whereas an optimal set of size~7 must be a 3-dimensional subcube minus a point, which contains no Hamming ball of radius~1. Hence, the problem for $r\geq 2$ is somewhat harder than in the case $r=1$. Still, as we shall see, reasonably good bounds can be obtained in many cases.

The problem of determining (or bounding) $D(m,n,r)$ was considered by Kahn, Kalai and Linial in \cite{kkl}. For half-sized sets, they solve the problem completely, proving that
\begin{equation} \label{eq:kkl-bound} D(2^{n-1},n,r) = 2^{n-2}\sum_{j=1}^{r} {n-1 \choose j} \qquad\forall r,n \in \mathbb{N}.\end{equation}
(For odd $r$, the extremal sets for (\ref{eq:kkl-bound}) are precisely the $(n-1)$-dimensional subcubes; for even $r$, the set of all vectors of even Hamming weight is also extremal.) Kahn, Kalai and Linial also observe that if $(r/n)\log (2^n/m) = o(1)$, then the trivial upper bound (\ref{eq:triv}) is asymptotically sharp, i.e.\
$$D(m,n,r) = (1-o(1))\tfrac{1}{2} \cdot m \sum_{j=1}^{r} {n \choose j};$$
this can be seen by considering the initial segment of the binary ordering on $\{0,1\}^n$ with size~$m$ --- for example a subcube, if $m$ is a power of 2. Finally, they observe that Kleitman's diametric theorem~\cite{Kleitman1966} implies that if $m$ is `very' small, then the `other' trivial upper bound $D(m,n,r) \leq {m \choose 2}$ is sharp. In particular, for even values of $r$ we know that 
%$$D(m,n,r) = {m \choose 2} \quad \iff \quad m \leq \sum_{j=0}^{r/2} {n \choose j}.$$
$D(m,n,r) = {m \choose 2}$ if and only if $m \leq \sum_{j=0}^{r/2} {n \choose j}.$
In this case, one may consider an $m$-element subset of a Hamming ball of radius $r/2$, which has diameter at most~$r$.
A similar result for small sets and odd $r$ holds as well.

It is also natural to consider the edge isoperimetric problem for the subgraph of $Q_n^r$ induced by the binary vectors of Hamming weight $k$, or equivalently the graph with vertex-set ${[n] \choose k}$ where two $k$-sets are joined if their symmetric difference has size at most~$r$. In the case $r=2$, this graph is called the `Kleitman-West graph', and the edge isoperimetric problem has been called the `Kleitman-West problem' (see e.g.~\cite{harper-kw}). An elegant conjecture of Kleitman (as to the complete solution of the latter edge isoperimetric problem for all $k$ and all vertex-set sizes) was disproved by Ahlswede and Cai \cite{ahlswede-cai-counter}; only for $k\leq 2$ is a complete solution known \cite{ahlswede-cai, ahlswedekatona}. Related results 
have been obtained by Ahlswede and Katona~\cite{ahlswedekatona} and Das, Gan and Sudakov~\cite{das2016minimum} (Theorem 1.8 in the latter paper implies a solution to the Kleitman-West problem for certain large values of $n$, for each fixed $k$). Harper attempted to resolve the edge isoperimetric problem in this case via a continuous relaxation~\cite{harper-kw}. Unfortunately, Harper's argument works only in certain special cases, and he later demoted his claim to a conjecture~\cite{harperbook}.

\subsection{Our results}
We obtain the following bounds on $D(m,n,r)$. For brevity, we state our theorems in terms of the function $\ell = \ell(m) = 
\min\left\{
\left\lceil 
\frac{2\log m}{\log n - \log \log m} 
\right\rceil,
\floor{\log m}
\right\}$. All logs are base two. Our results are only novel when the minimum for $\ell$ is achieved by the first term. This is case when $m$ and $n$ satisfy ${\frac{2\log m}{\log \dim - \log \log m}} \leq {\log m}$, or in other words, when $m \leq 2^{\dim/4}$. We introduce the $\ell$ notation here since we use it in several places in our proofs.

\begin{theorem}
	\thmlab{even-power-approx}
	Let $m,n,t \in \mathbb{N}$ with $2^t \leq m \leq 2^n$. Then
	$$
	D(m,n,2t) \ \leq\  
	\left(\frac{8e}{t}\right)^{2t} 
	\cdot \left(\dim \cdot \ell  \right)^t 
	\cdot m.
	$$
\end{theorem}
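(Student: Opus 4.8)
The plan is to bound $e_{Q_n^{2t}}(\A)$ for an arbitrary $\A\subseteq\{0,1\}^n$ with $|\A|=m$; since $Q_n^{2t}$ is regular, this is exactly the content of the stated bound on $D(m,n,2t)$. (One is tempted to use the identity $Q_n^{2t}=(Q_n^2)^t$ and \emph{tensorise} an $r=2$ estimate across $t$ steps, but the resulting bounds appear too lossy, so I would instead argue by a direct count.)

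The starting point is to decompose pairs by their \emph{common part}. For a pair $\{x,y\}$ of $\A$ at Hamming distance $d$ with $1\le d\le 2t$, write $u=x\cap y$, $S^-=x\setminus y$, $S^+=y\setminus x$, so that $x=u\cup S^-$, $y=u\cup S^+$ and $|S^-|+|S^+|=d$; orient the pair so that $|S^-|\le|S^+|$, whence $|S^-|\le t$. Setting $a=|S^+|$, $b=|S^-|$ and writing $D_c(v):=|\{S:\ |S|=c,\ S\cap v=\emptyset,\ v\cup S\in\A\}|$ for the number of $c$-element extensions of a set $v$ lying inside $\A$, this assigns to each oriented pair a triple $(u,S^+,S^-)$, and hence
\[
e_{Q_n^{2t}}(\A)\ \le\ \sum_{\substack{0\le b\le a,\ b\le t\\ 1\le a+b\le 2t}}\ \sum_{u}D_a(u)\,D_b(u),
\]
with the $(a,b)=(t,t)$ term expected to dominate. (Pairs with $|S^-|=|S^+|$ get counted twice, which only helps; such equal-weight pairs are the edges of a Kleitman--West-type graph on a single Hamming layer, and the same argument applied one layer down is what gives the Kleitman--West corollary.)

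It then remains to bound $\sum_u D_a(u)D_b(u)$. By Cauchy--Schwarz this reduces to estimating $\sum_u D_c(u)^2$, which a short computation rewrites as $\sum_{A,A'}\binom{|A\cap A'|}{\,c-|A\setminus A'|\,}$, the sum over ordered pairs $A,A'\in\A$ with $|A|=|A'|$ and $|A\setminus A'|\le c$. Peeling off the diagonal and bounding the remaining binomials crudely yields a self-referential inequality of the shape $\sum_u D_c(u)^2\le m\binom{n}{c}+\binom{n}{c}\cdot O\!\big(\sum_{j\le c}D(m,n,2j)\big)$; interleaving the even and odd powers and inducting on $t$ is meant to close the recursion and extract the constant $(8e/t)^{2t}$.

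The step I expect to be the real obstacle is that these crude binomial bounds are too weak on their own: they produce $\binom{n}{t}^2$-type quantities, of order $n^{2t}$, rather than the $\binom{n}{t}\binom{\ell}{t}$-type quantities, of order $(n\ell)^t$, that the theorem demands. Replacing one factor of $n$ by a factor of $\ell$ at each \emph{level} has to exploit that $|\A|=m$ is small relative to the cube: since $\binom{n}{\le\ell}\gtrsim m$ while $\binom{n}{\le\ell'}\ll m$ for $\ell'$ substantially below $\ell$, a set of size $m$ cannot have many $c$-element extensions simultaneously in many directions, so a quantity such as $\sum_{A\in\A}\binom{|A|}{c}$ or $\sum_u D_c(u)^2$ ought to be controllable by $m\binom{n}{c}$ times a factor of order $(\ell/c)^c$ — with the Hamming ball (and sphere) of radius $\approx\ell/2$ witnessing sharpness, and the cap $\ell\le\log_2 m$ corresponding to the sub-cube regime. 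Making this amortisation rigorous — presumably by a shadow or compression argument on each Hamming layer, feeding in Theorem~\ref{thm:edge-iso} — and then carefully summing the resulting $\binom{n}{a}$-by-$\binom{\ell}{b}$ contributions over all admissible $(a,b)$ is where the bulk of the work lies.
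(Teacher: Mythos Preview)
Your proposal correctly identifies the main obstacle --- trading a factor of $n^t$ for $\ell^t$ --- but does not actually supply the mechanism that achieves it; the last paragraph is an honest admission that ``the bulk of the work lies'' in an amortisation you have not carried out. The Cauchy--Schwarz / recursion route you sketch is plausible in spirit, but the self-referential inequality you write down, $\sum_u D_c(u)^2 \le m\binom{n}{c} + \binom{n}{c}\cdot O(\sum_{j\le c} D(m,n,2j))$, has a coefficient $\binom{n}{c}$ on the recursive term, and unwinding such a recursion tends to produce exactly the $n^{2t}$-type blow-up you are trying to avoid. The vague appeal to ``a shadow or compression argument on each Hamming layer'' is not yet an argument.

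The paper takes a different and more direct route. It first compresses $\A$ to a left-compressed down-set (\propref{compressed}); this is not merely a cosmetic reduction but is what makes the rest possible. One then fixes a single threshold $\beta \approx (n/\log|\A|)^{1/2}\ell$ and proves (\lemref{few-large}) that \emph{every} $x\in\A$ has at most $\ell$ coordinates above $\beta$. With this structural fact in hand, for each pair $\{x,y\}\in\mathsf{E}_{(b,a)}(\A)$ one splits the symmetric difference into its ``large'' part (above $\beta$) and ``small'' part (at most $\beta$), and counts directly: choices of large coordinates contribute factors of $n$ and $\ell$, choices of small coordinates contribute factors of $\beta$ and $|x|$, and the calibration $\beta^2\le n\ell$, $|x|^2\le n\ell$, $\beta|x|\le n\ell$ (\propref{sB-bounds}) converts everything into powers of $n\ell$. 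No Cauchy--Schwarz or induction on $t$ is needed; the $\ell$-saving is baked in once \lemref{few-large} is available. If you want to salvage your outline, the missing ingredient is precisely an analogue of that lemma: a pointwise structural constraint on elements of (a compressed) $\A$ that limits how many ``expensive'' coordinates any single element can use.
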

\begin{theorem}
	\thmlab{odd-power-approx}
	Let $m,n,t \in \mathbb{N}$ with $2^t \leq m \leq 2^n$. Then
	$$
	D(m,n,2t+1) \ \leq\  
	\left(\frac{16e}{2t+1}\right)^{2t+1} 
	\cdot \left(\dim \cdot \ell  \right)^t 
	\cdot m \cdot \log m.
	$$
\end{theorem}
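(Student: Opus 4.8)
The plan is to bound the number of induced edges $e_{Q_n^{2t+1}}(\A)$ of an arbitrary $\A\subseteq\{0,1\}^n$ with $|\A|=m$ by splitting the pairs of $\A$ according to their Hamming distance. Pairs at distance at most $2t$ contribute $e_{Q_n^{2t}}(\A)\le D(m,n,2t)$, which Theorem~\thmref{even-power-approx} already bounds by a quantity of the desired shape $(\,\cdot\,)^{2t}(n\ell)^t m$; so everything reduces to controlling the ``top slice'' $N:=\#\{\{x,y\}\subseteq\A:\ |x\triangle y|=2t+1\}$, for which the target is $N\le c_t\,(n\ell)^t\, m\log m$. A convenient first observation is that, since $2t+1$ is odd, every top-slice pair has one endpoint of even Hamming weight and one of odd Hamming weight; writing $\A=\A_{\mathrm{even}}\cup\A_{\mathrm{odd}}$ accordingly, $N$ is a bipartite edge count between $\A_{\mathrm{even}}$ and $\A_{\mathrm{odd}}$, which is often easier to estimate than a general edge count.

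To bound $N$, I would peel off a single coordinate. For a top-slice pair $\{x,y\}$ and any coordinate $i\in x\triangle y$, the vector $x\oplus e_i$ lies at Hamming distance $1$ from $x$ and at Hamming distance exactly $2t$ from $y$. Thus each top-slice pair is ``witnessed'' by a $Q_n^1$-edge incident to $\A$ together with a $Q_n^{2t}$-edge, and the aim is to count such witnesses using Theorem~\ref{thm:edge-iso} (in the form $e_{Q_n}(\calS)\le\tfrac12|\calS|\log_2|\calS|$) for the distance-$1$ part and Theorem~\thmref{even-power-approx} for the distance-$2t$ part. The natural vehicle is an auxiliary set $\A^{\ast}$ obtained from $\A$ by adjoining single-coordinate flips of its elements, chosen so that (i) inside $\A^\ast$ every top-slice pair of $\A$ is realised as a $Q_n^1$-edge followed by a $Q_n^{2t}$-edge, and (ii) $|\A^\ast|$ exceeds $m$ by at most a bounded (or at most $\mathrm{polylog}(m)$) factor. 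Granting such an $\A^\ast$, one obtains $N\lesssim\big(\text{number of $Q_n^1$-edges at $\A^\ast$}\big)\cdot\big(\text{max $Q_n^{2t}$-degree within }\A^\ast\big)$ or a Cauchy--Schwarz variant thereof, and plugging in the two isoperimetric bounds together with $\binom nj\le(en/j)^j$ and the definition of $\ell$ yields the stated estimate after routine simplification; the single surviving factor $\log m$ (as opposed to $\ell$) is precisely the contribution of the $r=1$ inequality, and explains why the ``half-step'' in the odd case is governed by the cruder quantity $\log m$ rather than by $\ell$.

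The main obstacle is step (ii): arranging that peeling a coordinate loses only a $\mathrm{poly}(1/t)\cdot\log m$ factor and not a factor of $n$. Two naive implementations both fail for this reason: summing over all $i\in x\triangle y$ and bounding, for each of the $n$ coordinates, the resulting family of distance-$2t$ pairs by $D(m,n,2t)$ gives only $N\le\tfrac{n}{2t+1}D(m,n,2t)$, which is hopeless when $n\gg\log m$; and embedding $\A$ into $\{0,1\}^{n+1}$ as $(\A\times\{0\})\cup(\A\times\{1\})$ to reduce the odd problem to the even problem for $Q_{n+1}^{2t+2}$ introduces a spurious extra factor of order $n\ell$. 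Overcoming this requires a compression/shifting reduction: one first argues that it suffices to prove the bound when $\A$ is a down-set (and, further, a suitably shifted set), in which case the relevant single-coordinate flips are confined to few coordinates, so that the passage from $\A$ to $\A^\ast$ is controlled and Theorem~\ref{thm:edge-iso} replaces the rogue factor $n$ by $\log m$. Establishing that such compressions do not decrease $e_{Q_n^{2t+1}}(\A)$ and that they genuinely tame the coordinates involved is, I expect, the technical heart of the argument, and should parallel the corresponding compression step used in the proof of Theorem~\thmref{even-power-approx}.
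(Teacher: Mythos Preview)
Your plan treats the odd case as ``even theorem plus one peeled coordinate'', using \thmref{even-power-approx} as a black box. The paper does not do this. Both theorems are deduced from the same pair of lemmas, \lemref{eba-less} and \lemref{eba-more}, which bound $\mathsf{e}_{(b,a)}(\A)$ for a left-compressed down-set and already split on the parity of $b+a$. In the odd-$b+a$ case the per-vertex count in \lemref{eba-less} leaves one unpaired factor of $|x|\le\log|\A|$ after the remaining factors are absorbed via $\beta|x|\le n\ell$ and $\beta^2,|x|^2\le n\ell$; this lone $|x|$ is exactly the $\log m$ you are trying to manufacture. The proof of \thmref{odd-power-approx} is then a short deduction: sum $\mathsf{e}_{(b,a)}(\A)$ over $b+a\le 2t+1$, reuse the bound (\ref{ab-2t}) for even $b+a$, and observe that the odd-$b+a$ bound from the two lemmas is maximised at $b+a=2t+1$. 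No auxiliary set $\A^\ast$, no new inequality.

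Your witness-counting scheme has a concrete gap. After compression to a left-compressed down-set, the double count you describe gives $(t+1)N\le\sum_{x'\in\A}d_1^{\uparrow}(x')\cdot d_{2t}(x')$, where $d_1^{\uparrow}$ is the up-degree in $Q_n$ and $d_{2t}$ the degree in $Q_n^{2t}$, both restricted to $\A$. Neither of your proposed closures works: the maximum of $d_{2t}$ over $\A$ can be $m-1$ (take $m=2^t$, so every element of the down-set has weight $\le t\le 2t$ and is adjacent to $\emptyset$), and the maximum of $d_1^{\uparrow}$ can be $\min(m-1,n)$ rather than $O(\log m)$ (for $x'=\emptyset$ in $\A=\{\emptyset,\{1\},\ldots,\{m-1\}\}$). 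Cauchy--Schwarz only trades the problem for bounding $\sum_{x'}d_{2t}(x')^2$, a cherry count in $Q_n^{2t}\cap\A$ on which you have no handle. Your intuition that compression ``confines flips to few coordinates'' is also off: what left-compression buys (\lemref{few-large}) is that at most $\ell$ coordinates of any $x\in\A$ exceed the threshold $\beta$, not that the coordinates themselves are few. To rescue your approach you would need a per-vertex $Q_n^{2t}$-degree bound, which amounts to reopening \lemref{eba-less} and \lemref{eba-more}---at which point you are carrying out the paper's argument rather than a black-box reduction.
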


The two theorems above are tight up to a constant factor depending on $t$, viz., a factor of $\exp(\Theta(t))$; see below for details. %, as evidenced by Hamming balls for even $r$ and by certain products of Hamming balls and subcubes for odd $r$. 
In the case $r=2$, we prove a sharper bound (\thmref{distance-two}), which implies a new bound for the Kleitman-West problem (\thmref{kwc}). 
% For larger $r$, we make no attempt to exactly optimize the constant factors. 
Determining the optimal solution to the isoperimetric problem for all vertex-set-sizes remains a challenging open problem, one which seems beyond the reach of our techniques. As mentioned above, even the restriction to $k$-sets and $r=2$ is open for $k\geq 3$, that is, the Kleitman-West problem remains unsolved. 

% \begin{remark}[Tightness] 
\paragraph{Tightness.}
	For fixed $t \in \mathbb{N}$, \thmref{even-power-approx} is sharp up to a factor of $\exp(\Theta(t))$, as can be seen by taking $\A = [\dim]^{(\leq k)}$, i.e., a Hamming ball. In fact, this example motivates our definition of $\ell$ above, capturing the way $e_{Q^{2t}_n}(\A)$ scales as a function of $|\A|$.
	
	\thmref{odd-power-approx} is also sharp up to a factor of $\exp(\Theta(t))$, as can be seen by considering
	$$\A_{k,t} = \{x \subseteq [\dim] \ : \ |x \cap \{k-t+1,\ldots,\dim\}| \leq t\}.$$ 
	When $n \geq k \geq t$, we have 
	$$|\A_{k,t}| = 2^{k-t} \sum_{i=0}^t \binom{n-k+t}{i}.$$
    Denoting $\A = \A_{k,t}$, we sketch the calculations for $k = \Theta(\log n)$ and for fixed $t \in \mathbb{N}$. Note that for this parameter range, $\log |\A| = \Theta_t(\log n) = \Theta_t(k)$, and hence, $\ell = \Theta_t(1)$.
    We claim that there are $\Omega_t(1) n^t |\A| \log |\A|$ pairs with Hamming distance $2t+1$. Given our assumptions on $k,t$, this implies that $D(m,n,2t+1) \geq \Omega_t(1) (n \ell)^t m \log m$ for sets of size $m = 2^{\Theta(\log n)}$. We count pairs $x,y \in \A$ with $|x \Delta y| = 2t+1$ and $|(x\setminus y) \cap \{k-t+1,\ldots, n\}| = |(y\setminus x) \cap \{k-t+1,\ldots, n\}| = t$. There are $\binom{n-k}{t} \binom{n-k+t}{t}$ ways to satisfy this equality, and doing so incurs a Hamming distance of $2t$ restricted to $\{k-t+1,\ldots, n\}$. Then, if $x$ has $j$ ones in positions $\{1,\ldots,k-t\}$, changing one of these ones to a zero, i.e., $|(x\setminus y) \cap [k-t]| =1$, leads to $|x \Delta y| = 2t+1$. Hence, the number of such $\{x,y\}$ pairs is 
    % at least
    \begin{align*}
    % \frac{1}{2}
    \binom{n-k}{t} \binom{n-k+t}{t}\cdot \sum_{j=0}^{k-t} \binom{k-t}{j} j
    &= 
    % \frac{1}{2}
    \binom{n-k}{t} \binom{n-k+t}{t} \cdot 2^{k-t-1} (k-t)\\
    &= \Omega_t(1) \cdot (n-k)^t |\A| (k-t) \\ 
    &= \Omega_t(1) \cdot n^t |\A| k \\
    &= \Omega_t(1) \cdot n^t |\A| \log |\A|.
    \end{align*} 

\paragraph{Independent Work.} Kirshner and Samorodnitsky \cite{KirshnerSamorodnitsky} independently obtained isoperimetric results similar to those proved in this paper. They use very different methods, and we briefly sketch their results here. For any function $f: \{0,1\}^n \rightarrow \mathbb{R}$ and $p \geq 1$, as usual we define the {\em $p$-norm} of $f$ by $\norm{f}_p = \left(\mathbb{E}_x \left[|f(x)|^p \right]\right)^{1/p},$ where the expectation is over a uniformly random $x \in\{0,1\}^n$. Let $H(\cdot)$ be the binary entropy function (i.e., for $q \in (0,1)$ we let $H(q) := -q \log_2(q) - (1-q) \log_2(1-q)$), and let $\psi(p,t)$ be the function on $[2,\infty) \times [0,1/2]$ defined by $$\psi (p,t) = (p-1) + \log_2\left( (1-\delta)^p + \delta^p\right) - \frac{p}{2} H(t) - pt \log_2(1-2\delta),$$ where $\delta$ is determined by $t = (\frac{1}{2} - \delta) \cdot \frac{(1-\delta)^{p-1} - \delta^{p-1}}{(1-\delta)^p + \delta^p}$. Kirshner and Samorodnitsky show that for $p \geq 2$ and $0 \leq s \leq \frac{n}{2}$, and for a homogeneous polynomial $f$ of degree $s$ on $\{0,1\}^n$ we have $$\frac{\norm{f}_p}{\norm{f}_2} \leq 2^{\psi \left(p,s/n \right) \cdot \frac{n}{p}}.$$ Furthermore, they show that in a well-defined sense this inequality is `nearly tight' if $f$ is the Krawchouk polynomial (the Fourier transform of the characteristic function of a Hamming sphere). Kirshner and Samorodnitsky then show that these results imply for each $0 \leq s \leq n/2$ and $1 \leq r \leq 2s(1-\frac{s}{n})$ that 
\begin{equation}\eqlab{KSUpperBound}
D \left( \sum_{t=0}^s \binom{n}{t},n,r \right) \leq \left( \sum_{t=0}^s \binom{n}{t} \right) \cdot 2^{H \left( \frac{r}{2s}\right)\cdot s + H\left(\frac{r}{2(n-s)}\right)\cdot (n-s)}.
\end{equation}
For odd $r$ this upper bound is tight up to a factor of $O\left(\sqrt{\frac{n-s}{s}} \cdot r\right)$. We can compare this to one of the main theorems of this paper, \thmref{odd-power-approx}, which is tight up to a factor $\exp(\Theta(r))$. For fixed $s$, \thmref{odd-power-approx} is stronger for $r < \frac{1}{2} \log n $ and $n$ sufficiently large. However, the isoperimetric bounds achieved by Kirshner and Samorodnitsky for even $r$, which are tight up to a factor of $O(r)$, improve upon the second main theorem of the paper, \thmref{even-power-approx}, which is only tight up to $\exp (\Theta(r))$. Applying the upper bound \Eqref{KSUpperBound} to the Kleitman-West graph ($r=2$), we see that their result implies a bound that is tight up to a factor of $2e^2 \approx 14.778$. This is weaker than our upper bound for $r=2$ in \thmref{kwc}, which is tight up to $2+o(1)$; see \secref{disttwo}, where \thmref{kwc} follows from \thmref{distance-two}. 

\paragraph{Subsequent Work.} In the time since the submission of this paper, \thmref{even-power-approx} and~\ref{theo:odd-power-approx} have been used to obtain new model counting results~\cite{meel2020sparse}. 
Improved bounds on $D(m,n,r)$ have also been proven for large set sizes, $m = \alpha 2^n$ with $\alpha \in (0,1)$, using probabilistic techniques~\cite{yu2022edge}.
In general, there has been much work on discrete isoperimetric inequalities in the Hamming cube \cite{jiang2022isoperimetric, kahn2020isoperimetric, keevash2018stability, keevash2020stability, przykucki2020vertex} and related studies \cite{eldan2020concentration, kahn2019number}.

\subsection{Notation and Preliminaries}
\seclab{notation}

For subsets $\A \subseteq \{0,1\}^n$, we let $\Er(\A)$ denote the set of edges in the subgraph of $Q_n^r$ induced by vertices in $\A$, and we write $\er(\A): =  |\Er(\A)|$. In this notation, notice that $D(m,n,r) = \max_{\A : |\A|=m} \er(\A)$. Abusing notation slightly, we move freely between $\{0,1\}^n$ and $\mathcal{P}([n])$ via the bijection $x \leftrightarrow \{i \in [n]:\ x_i=1\}$. We say $\A \subseteq \bits$ is a {\em down-set} if ($x\in \A$, $y \subseteq x) \Rightarrow y \in \A$. We say $\A$ is {\em left-compressed} if whenever $1\leq i < j \leq n$ and $x\in \A$ with $x \cap \{i,j\}=\{j\}$, we have $(x \cup \{i\}) \setminus \{j\} \in \A$. 

Standard compression arguments (cf.~\cite{ahlswede-cai,anderson, harperbook}) imply the following.
\begin{proposition} 
	\proplab{compressed}
	Let $\dim,m$ be positive integers with $m \leq 2^{\dim}$. Among all subsets $\A$ of $\bits$ of size $m$, the maximum of $\er(\A)$ is attained where $\A$ is a left-compressed down-set.
\end{proposition}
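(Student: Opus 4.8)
The plan is to combine two families of compression operators with a single integer monovariant. For each $i \in [n]$, let the \emph{down-compression} $D_i$ send $\A$ to the set obtained by replacing each $x \in \A$ with $x \setminus \{i\}$ whenever $i \in x$ and $x \setminus \{i\} \notin \A$, and leaving $x$ unchanged otherwise; for each $i < j$, let the \emph{left-compression} $U_{ij}$ replace each $x \in \A$ with $(x \setminus \{j\}) \cup \{i\}$ whenever $x \cap \{i,j\} = \{j\}$ and $(x \setminus \{j\}) \cup \{i\} \notin \A$, and leave $x$ unchanged otherwise. Each operator restricts to a bijection on $\A$, so preserves $|\A|$, and $\A$ is a left-compressed down-set if and only if it is a fixed point of every $D_i$ and every $U_{ij}$. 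The quantity $\Phi(\A) := \sum_{x \in \A}\sum_{k \in x} 2^{k}$ is a nonnegative integer that strictly decreases whenever any $D_i$ or $U_{ij}$ acts nontrivially; hence, starting from a maximiser of $\er$ among size-$m$ subsets of $\bits$ and applying these operators in any order, the process terminates, necessarily at a left-compressed down-set. (Interleaving the two kinds of compressions freely in this way also sidesteps the usual need to verify that left-compressions preserve the down-set property.) It therefore suffices to prove that neither $D_i$ nor $U_{ij}$ ever decreases $\er$.

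For $D_i$, partition $\bits$ into the $2^{n-1}$ columns $\{z, z \cup \{i\}\}$ with $z \subseteq [n] \setminus \{i\}$. Since $D_i$ merely permutes which cell of each column is occupied, it preserves the occupancy count $a(z) := |\A \cap \{z, z \cup \{i\}\}|$ of every column, and when $a(z) = 1$ it places the occupied cell at the bottom copy $z$. I would then classify the edges of $Q_n^r$ by the pair of columns meeting them. Edges with both endpoints in one column, and edges between distinct columns $z, w$ with $|z \triangle w| \le r - 1$, contribute amounts depending only on the numbers $a(\cdot)$, hence are unaffected. The sole remaining case is a pair of columns with $|z \triangle w| = r$, and here a short inspection of the four relevant cell-pairs shows the contribution cannot go down (and strictly increases exactly when both columns are singly occupied with their occupied cells on ``opposite sides''). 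Summing over all column-pairs yields $\er(D_i(\A)) \ge \er(\A)$. The argument for $U_{ij}$ runs along the same lines, with the $2^{n-2}$ four-element fibers $\{z, z \cup \{i\}, z \cup \{j\}, z \cup \{i,j\}\}$, $z \subseteq [n] \setminus \{i,j\}$, playing the role of columns: $U_{ij}$ preserves each fiber's occupancy count and leaves its occupancy left-compressed, within-fiber contributions and between-fiber contributions with $|z \triangle w| \le r - 2$ are untouched, and the two boundary cases $|z \triangle w| \in \{r-1, r\}$ reduce to checking that moving an occupied ``anti-diagonal'' cell of a fiber onto the other anti-diagonal cell creates no new diagonal coincidence between two fibers.

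I expect the only real work to be these local case analyses at the boundary distances $|z \triangle w| \in \{r-1, r\}$. They are entirely elementary, but need to be done with some care, because this is precisely where the two distance scales interact: the size of the symmetric difference of the indices of the two columns (or fibers) on the one hand, and on the other the additional $0$, $1$, or $2$ contributed by the coordinate(s) on which the compression operates. Everything else is bookkeeping.
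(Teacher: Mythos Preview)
Your proposal is correct. The paper itself gives no proof of this proposition, deferring instead to standard references on compression (\cite{ahlswede-cai,anderson,harperbook} in the paper's bibliography); what you have outlined is exactly the standard argument those references contain, carried out for $Q_n^r$ rather than $Q_n$. The column/fiber decomposition and the boundary cases $|z\triangle w|=r$ (for $D_i$) and $|z\triangle w|\in\{r-1,r\}$ (for $U_{ij}$) are the right places to look, and the required inequalities there reduce to one- and two-variable rearrangement facts. Your use of a single integer monovariant $\Phi(\A)=\sum_{x\in\A}\sum_{k\in x}2^{k}$, allowing the two kinds of compressions to be interleaved freely without separately checking that $U_{ij}$ preserves the down-set property, is a clean way to package the termination step.
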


\begin{proposition} 
	\proplab{max-weight}
	Let $\A \subseteq \bits$ be a down-set. For every $x\in \A$, we have $|x| \leq \floor{\log |\A|}$.
\end{proposition}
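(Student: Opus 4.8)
The plan is to exploit the down-set property directly via an elementary counting argument. Fix any $x \in \A$. Since $\A$ is a down-set and $x \in \A$, every $y \subseteq x$ (equivalently, every $0$-$1$ vector $y$ with $y \leq x$ coordinatewise) also lies in $\A$. These subsets are pairwise distinct, and there are exactly $2^{|x|}$ of them, so $|\A| \geq 2^{|x|}$. Taking base-$2$ logarithms yields $|x| \leq \log|\A|$, and since $|x|$ is a nonnegative integer it follows that $|x| \leq \floor{\log|\A|}$, which is the assertion.

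There is no real obstacle to overcome here: the only facts used are that a down-set containing a set of size $w$ must contain its entire $2^w$-element power set, and that the floor function is monotone. Equivalently, one may phrase the argument geometrically: the subcube $\{y \in \bits :\ y \subseteq x\}$ is contained in $\A$, so $\A$ has size at least that of a $|x|$-dimensional subcube, namely $2^{|x|}$. I expect the write-up to be a single short paragraph, with the only mild care needed being the passage from the real inequality $|x| \leq \log|\A|$ to the integer inequality $|x| \leq \floor{\log|\A|}$.
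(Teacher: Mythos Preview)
Your proof is correct and is essentially identical to the paper's own argument: the paper also observes that all $2^{|x|}$ subsets of $x$ lie in the down-set $\A$, giving $2^{|x|} \leq |\A|$ and hence $|x| \leq \lfloor \log|\A| \rfloor$.
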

\begin{proof}
	As $x \in \A$, we also have $y \in \A$ for all $y \subseteq x$. The number of such $y$ is $2^{|x|} \leq |\A|$.
\end{proof}

\begin{remark} \propref{compressed} and \propref{max-weight} imply $\mathsf{e}_{\leq 1}(\A) \leq \floor{\log |\A|}\cdot |\A|$. Indeed, for a down-set $\A$, we have $\mathsf{e}_{\leq 1}(\A) = \sum_{x\in \A} |x| \leq |\A| \cdot \floor{\log |\A|}$. This approximates, up to a factor of two, the optimal bound $\mathsf{e}_{\leq 1}(\A) \leq (1/2)\cdot|\A| \cdot \floor{\log |\A|}$ mentioned above~\cite{bernstein, harper, hart, lindsey}.
\end{remark}

We also make use of the following technical result to bound sums of binomial coefficients. The proof  of this proposition can be found in the appendix.

\begin{proposition}
	\proplab{binomial-bound2}
	For all $m \in \mathbb{N}\cup \{0\}, \lambda \in [0,1), K \in \mathbb{R}^+$ we have for $m \neq 0$

\begin{equation*}
\left( \frac{K}{m}\right)^m + \left( \frac{K}{m+1}\right)^{m+1} \geq \left( \frac{K}{m+ \lambda}\right)^{m+ \lambda},
\end{equation*}
and for $m = 0$, we have
$
1 + K \geq \left( \frac{K}{\lambda}\right)^{\lambda}.
$

\end{proposition}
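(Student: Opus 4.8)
The plan is to reduce both inequalities to a single one-variable statement about the function $f(x) := (K/x)^x$ on $(0,\infty)$, extended continuously by $f(0) := 1$ (note $\lim_{x\to 0^+} x\log(K/x) = 0$). Using the convention $(K/0)^0 = 1$, both displayed inequalities are special cases of
$$f(m+\lambda)\ \le\ f(m)+f(m+1),\qquad m\in\N\cup\{0\},\ \lambda\in[0,1).$$

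First I would determine the shape of $f$: since $\log f(x)=x\log(K/x)$ has derivative $\log(K/x)-1$, the function $f$ strictly increases on $(0,K/e]$ and strictly decreases on $[K/e,\infty)$, attaining its maximum $f(K/e)=e^{K/e}$. Writing $x=m+\lambda\in[m,m+1)$, there are then three cases. If $K/e\le m$, then $f$ decreases on $[m,m+1]$, so $f(x)\le f(m)$; if $K/e\ge m+1$, then $f$ increases on $[m,m+1]$, so $f(x)\le f(m+1)$; in both cases we are done since $f\ge 0$. In the remaining case $K/e\in(m,m+1)$ we have $f(x)\le e^{K/e}$, so it suffices to prove
$$e^{K/e}\ \le\ \left(\tfrac{K}{m}\right)^{m}+\left(\tfrac{K}{m+1}\right)^{m+1}\qquad\text{for }K\in\bigl(em,\,e(m+1)\bigr).$$

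For $m\ge 1$, I would substitute $K=e(m+s)$ with $s\in(0,1)$ and divide by $e^m$, reducing this to $e^{s}\le\bigl(1+\tfrac{s}{m}\bigr)^{m}+e\,\bigl(1-\tfrac{1-s}{m+1}\bigr)^{m+1}$. Two applications of Bernoulli's inequality give $\bigl(1+\tfrac{s}{m}\bigr)^{m}\ge 1+s$ and $\bigl(1-\tfrac{1-s}{m+1}\bigr)^{m+1}\ge 1-(1-s)=s$, so the right-hand side is at least $1+s+es\ge 1+(e-1)s\ge e^{s}$, where the last step is the secant bound $e^{s}\le(1-s)\cdot 1+s\cdot e$ coming from convexity of the exponential. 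For $m=0$ the remaining case asks for $e^{K/e}\le 1+K$ on $(0,e)$, which holds because $K\mapsto 1+K-e^{K/e}$ vanishes at $0$ and has derivative $1-e^{K/e}/e\ge 0$ throughout $[0,e]$; and for $K\ge e$ the function $f$ is increasing on $(0,1]$, so $f(\lambda)\le f(1)=K\le 1+K$, completing the $m=0$ case.

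I expect no serious obstacle here. The only point needing care is the case split on the position of the maximizer $K/e$ relative to $[m,m+1]$; the ``hard'' middle case is in fact comfortable, since away from the endpoints of the $K$-interval the inequality has a lot of slack (one of the two summands on the right alone dominates), so even the crude Bernoulli estimates are more than enough.
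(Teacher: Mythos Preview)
Your argument is correct, and its high-level structure coincides with the paper's: both analyse $g(x)=(K/x)^x$, note that it is unimodal with maximum at $x=K/e$, and split into the trivial cases $K/e\le m$, $K/e\ge m+1$ versus the ``hard'' case $K/e\in(m,m+1)$ where one must show $e^{K/e}\le (K/m)^m+(K/(m+1))^{m+1}$.

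The difference is entirely in how the hard case is dispatched. The paper devotes a separate technical proposition to this: it writes $e^{K/e}-(K/t)^t$ as an integral in $t$, bounds the integrand via monotonicity (checked by differentiating and using $\ln y\le y-1$), and thereby shows $e^{K/e}-\min\{(K/m)^m,(K/(m+1))^{m+1}\}\le \tfrac{1}{m}\min\{(K/m)^m,(K/(m+1))^{m+1}\}$; the small cases $m=0,1$ are treated by ad hoc concavity arguments. Your route is considerably shorter: the substitution $K=e(m+s)$ followed by two applications of Bernoulli's inequality and the secant bound $e^s\le 1+(e-1)s$ handles all $m\ge 1$ at once, and the $m=0$ case is a one-line derivative check. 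What the paper's approach buys is a slightly sharper quantitative statement (the $\tfrac{1}{m}$ error bound), which is not used elsewhere; what your approach buys is a proof that is an order of magnitude shorter and entirely elementary.
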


\section{The distance two case}\seclab{distance-two}
\seclab{disttwo}
The special case of our theorem for $r=2$ has a fairly simple proof and a tighter bound. 
%Moreover, compared to the general case, we reduce the constant in the bound to $C=2$.  
\begin{theorem}
	\thmlab{distance-two}
	Let $\A \subset \{0,1\}^{\dim}$ satisfy $1 \leq \log |\A| < \dim$. 
	Then
	$$
	\mathsf{e}_{\leq 2}(\A) 
	\leq \dim 
	\cdot \ell'
	\cdot |\A|,
	$$
	where 
	$\ell': = 
	\min\left\{
	\left\lceil 
	\frac{\log |\A|}{\log \dim - \log \log |\A|} 
	\right\rceil,
	\floor{\log |\A|}
	\right\}$.
\end{theorem}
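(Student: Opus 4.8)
The plan is to reduce to a left-compressed down-set via Proposition~\ref{prop:compressed}, prove an exact formula for $\mathsf{e}_{\le 2}$ in terms of coordinate degrees, and then bound that formula by two complementary estimates whose minimum is $n\,\ell'\,|\A|$.

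After reducing, write $m=|\A|$, $L=\lfloor\log m\rfloor$, set $d_j=|\{x\in\A:j\in x\}|$, and let $J$ be the largest coordinate occurring in $\A$, so $d_j=0$ for $j>J$. Left-compression gives $d_1\ge d_2\ge\cdots\ge d_n$, and $\sum_j d_j=\sum_{x\in\A}|x|=\mathsf{e}_{\le1}(\A)$, with $|x|\le L$ for all $x\in\A$ by Proposition~\ref{prop:max-weight}. The first step I would carry out is the identity
$$\mathsf{e}_{\le2}(\A)=\sum_{j=1}^{n}j\,d_j=\sum_{x\in\A}\ \sum_{j\in x}j ,$$
valid for left-compressed down-sets. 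To see it, split the Hamming-distance-$2$ edges of $Q_n^2$ inside $\A$ into ``upward'' pairs $\{x,x\cup\{i,j\}\}$ and ``swap'' pairs $\{x,(x\setminus\{j\})\cup\{i\}\}$ with $i<j$. Being a down-set, $\A$ contains every upward pair, and these biject with pairs $(z,\{i,j\})$, $z\in\A$, $\{i,j\}\subseteq z$, contributing $\sum_x\binom{|x|}2$; being left-compressed, $\A$ contains every leftward swap, so counting swap pairs at the endpoint holding the larger flipped coordinate contributes $\sum_x\sum_{j\in x}(j-1)-\sum_x\binom{|x|}2$. Adding the $\mathsf{e}_{\le1}(\A)=\sum_x|x|$ distance-$1$ edges makes the $\binom{|x|}2$ terms cancel and gives the identity.

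From the identity, the easy estimate is $\mathsf{e}_{\le2}(\A)=\sum_j j d_j\le n\sum_j d_j=n\,\mathsf{e}_{\le1}(\A)\le n L m$, which already proves the theorem when $\ell'=L$ (in particular when $\log n-\log\log m\le1$). So it remains to show $\mathsf{e}_{\le2}(\A)\le n\,a\,m$ with $a=\lceil\log m/(\log n-\log\log m)\rceil$; together with the easy estimate this yields $\mathsf{e}_{\le2}(\A)\le n\min\{a,L\}m=n\ell'm$. We may assume $a\le L$ (else the easy estimate suffices); then $a\le\log m$, so $a(\log n-\log\log m)\ge\log m$ gives $(n/\log m)^a\ge m$ and hence $\binom na\ge m$, i.e.\ the ball $[n]^{(\le a)}$ has size at least $m$. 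Now Chebyshev's sum inequality, applied to the increasing sequence $(j)_{j\le J}$ and the decreasing sequence $(d_j)_{j\le J}$, gives
$$\mathsf{e}_{\le2}(\A)=\sum_{j\le J}j\,d_j\ \le\ \frac1J\Bigl(\sum_{j\le J}j\Bigr)\Bigl(\sum_{j\le J}d_j\Bigr)=\frac{J+1}{2}\,\mathsf{e}_{\le1}(\A),$$
so it is enough to prove $(J+1)\,\mathsf{e}_{\le1}(\A)\le 2n\,a\,m$. The heart of the matter is that a left-compressed down-set cannot be both ``wide'' and ``heavy'': since $\A\subseteq\mathcal P([J])$ with all weights $\le L$ we have $m\le\sum_{i\le L}\binom Ji$, which caps $J$ once $\mathsf{e}_{\le1}(\A)$ is large, while $\binom na\ge m$ takes over when $\A$ is wide; bounding $\mathsf{e}_{\le1}(\A)$ and $J$ by binomial sums reduces $(J+1)\,\mathsf{e}_{\le1}(\A)\le 2n\,a\,m$ to a comparison of sums of binomial coefficients, which is precisely what Proposition~\ref{prop:binomial-bound2} delivers (it is what handles the partial top layer occurring when $m$ is not itself a ball size). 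The tight case is the ball $[n]^{(\le a)}$: there $d_j$ is constant in $j$, so Chebyshev is an equality, and the elementary bound $\sum_{i<t}\binom{N-1}i\le\frac tN\sum_{i\le t}\binom Ni$ (each term of $\sum_{i<t}(1-\tfrac t{i+1})\binom{N-1}i$ being non-positive), with $N=n$, $t=a$, gives $\mathsf{e}_{\le1}([n]^{(\le a)})=\sum_{i=0}^{a} i\binom ni\le a\,|[n]^{(\le a)}|$, whence $\mathsf{e}_{\le2}=\tfrac{n+1}2\mathsf{e}_{\le1}\le\tfrac{(n+1)a}2 m\le n\,a\,m$.

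The step I expect to be the main obstacle is this last one — turning ``not simultaneously wide and heavy'' into the clean inequality $(J+1)\,\mathsf{e}_{\le1}(\A)\le 2n\,a\,m$. The individually available bounds $J\le m$ and $\mathsf{e}_{\le1}(\A)\le Lm$ are far too weak (their product is $\sim Lm^2$), so the width/weight trade-off is essential, and organising it through the right binomial-coefficient estimates, with Proposition~\ref{prop:binomial-bound2} absorbing the non-ball sizes, is where essentially all of the work lies.
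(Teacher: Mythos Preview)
Your identity $\mathsf{e}_{\le 2}(\A)=\sum_{x\in\A}\sum_{j\in x}j=\sum_j j\,d_j$ is correct and is exactly the paper's \lemref{norm}; the easy bound $\sum_j j\,d_j\le n\sum_j d_j\le nLm$ is also fine and disposes of the case $\ell'=L$. The problem is the other branch: the Chebyshev route cannot work, because the intermediate inequality you aim for, $(J+1)\,\mathsf{e}_{\le 1}(\A)\le 2n a m$, is simply false for left-compressed down-sets.

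A concrete family of counterexamples is $\A=\mathcal{P}([k])\cup [n]^{(\le 1)}$ with $n=2^k$ and $k$ large. This is a left-compressed down-set with $m=2^{k+1}-k$, $L=k$, $J=n=2^k$, and $d_j=2^{k-1}$ for $j\le k$, $d_j=1$ for $k<j\le n$. One checks $\log m\approx k$, $\log n-\log\log m\approx k-\log k$, so $a=2$ for all large $k$. Now $\mathsf{e}_{\le 1}(\A)=k\,2^{k-1}+(2^k-k)\sim \tfrac{k}{2}\,2^{k}$, hence
\[
(J+1)\,\mathsf{e}_{\le 1}(\A)\ \sim\ \tfrac{k}{2}\,2^{2k},\qquad 2n a m\ \sim\ 8\cdot 2^{2k},
\]
and the ratio is $\Theta(k)$. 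In fact already the Chebyshev upper bound $\tfrac{J+1}{2}\mathsf{e}_{\le 1}(\A)\sim\tfrac{k}{4}2^{2k}$ exceeds $n\ell'm\sim 4\cdot 2^{2k}$, so no downstream estimate can rescue the argument. (The theorem itself does hold here: $\sum_j j\,d_j\sim 2^{2k-1}\le n\ell'm$.) The loss comes precisely from Chebyshev: the large values $d_j=2^{k-1}$ sit at the \emph{small} indices $j\le k$, so $\sum j\,d_j$ is much smaller than the Chebyshev bound, and your trade-off ``wide vs.\ heavy'' is not visible after averaging.

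The paper avoids this by never aggregating: it proves the pointwise bound $\|x\|=\sum_{j\in x}j\le n\ell'$ for every $x$ in a left-compressed down-set (\lemref{norm-bound}), by splitting $x$ at a threshold $\beta'=\lfloor n\ell'/\log m\rfloor$ into small and large coordinates and showing, via a counting argument using left-compression and the down-set property (together with \propref{binomial-bound2} to handle non-integer exponents), that $x$ cannot have many coordinates above $\beta'$. Summing $\|x\|\le n\ell'$ over $x\in\A$ then gives the theorem directly. If you want to salvage your approach, you would need to replace Chebyshev by something that respects where the mass of $(d_j)$ sits --- which is essentially what the pointwise argument does.
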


Using an observation of Ahlswede and Cai~\cite{ahlswede-cai}, we reduce the problem to bounding the ``sum of ranks'' of elements in $\A$. We provide a proof for completeness. Define the rank of $x$ as
$$\|x\| := \sum_{j \in [\dim]} jx_j = \sum_{j \in x}j.$$
\begin{lemma} 
	\lemlab{norm}
	Let $\A$ be a left-compressed down-set. Then,\  $ \mathsf{e}_{\leq 2}(\A) = \displaystyle \sum_{x \in \A} \norm x .$
\end{lemma}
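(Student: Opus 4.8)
The plan is to show that for a left-compressed down-set $\A$, each element $x \in \A$ contributes exactly $\|x\|$ to the count $\mathsf{e}_{\leq 2}(\A)$, by carefully splitting the edges of $Q_n^2$ incident to $x$ into those of Hamming distance $1$ and those of Hamming distance $2$, and counting each edge from its lexicographically/rank-wise "lower" endpoint to avoid double-counting. First I would recall that $\mathsf{e}_{\leq 2}(\A) = \mathsf{e}_{\leq 1}(\A) + \mathsf{e}_2(\A)$, where $\mathsf{e}_2(\A)$ counts pairs at Hamming distance exactly two. By the Remark following \propref{max-weight}, for a down-set $\A$ we already have $\mathsf{e}_{\leq 1}(\A) = \sum_{x \in \A} |x| = \sum_{x \in \A} |\{j \in x\}|$. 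So the crux is to prove $\mathsf{e}_2(\A) = \sum_{x \in \A}\bigl(\|x\| - |x|\bigr) = \sum_{x \in \A} \sum_{j \in x} (j-1)$.

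To do this, I would count each distance-$2$ edge $\{x, y\}$ once, charging it to whichever endpoint is "larger" in an appropriate sense. Two sets at symmetric difference exactly $2$ come in two flavors: either $y = (x \cup \{a\}) \setminus \{b\}$ with $a \notin x$, $b \in x$ (a "swap"), or $y = x \cup \{a, b\}$ / $y = x \setminus \{a,b\}$ (a "double-step"). For the double-step type, I would charge the edge to the larger set $x \cup \{a,b\}$; since $\A$ is a down-set, $x \cup \{a,b\} \in \A$ forces $x \cup \{a\}, x \cup \{b\} \in \A$, so these edges are in bijection with pairs $(x, \{a,b\})$ where $a, b \in x$, $a \neq b$ — but this seems to overcount relative to the target. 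The cleaner route, and the one I would actually pursue, is the one implicit in Ahlswede–Cai: for each $x \in \A$ and each $j \in x$, associate the single "downward swap or step" that decreases the rank by the smallest possible amount, and argue that left-compression plus the down-set property guarantees that exactly $j - 1$ distinct neighbors of $x$ in $Q_n^2$ that are "below" $x$ lie in $\A$ — namely, for $j \in x$, the sets obtained by replacing $j$ with any $i < j$ (whether $i \in x$, giving a distance-one move $x \setminus \{j\}$, or $i \notin x$, giving a swap), all of which lie in $\A$ by left-compression and the down-set property. Summing the count of such "below" neighbors over all $x \in \A$ counts every edge of $Q_n^2[\A]$ exactly once (from its upper endpoint), giving $\mathsf{e}_{\leq 2}(\A) = \sum_{x \in \A} \#\{\text{neighbors of } x \text{ below } x \text{ that are in } \A\} = \sum_{x \in \A}\sum_{j \in x} j = \sum_{x \in \A}\|x\|$.

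The main obstacle will be verifying the exact bijection in the previous paragraph: I must check (i) that for every $j \in x$ and every $i \in [\dim]$ with $i < j$, the set obtained from $x$ by "moving" the element $j$ down to position $i$ indeed lies in $\A$ — when $i \notin x$ this is left-compression applied to $x$ with $x \cap \{i,j\} = \{j\}$, and when $i \in x$ the relevant set is $x \setminus \{j\}$, which lies in $\A$ because $\A$ is a down-set; (ii) that these $\sum_{j \in x}(j-1) + |x| = \sum_{j\in x} j = \|x\|$ sets are pairwise distinct and are precisely the neighbors of $x$ in $Q_n^2$ that are $\|\cdot\|$-smaller than $x$; and (iii) that no neighbor of $x$ in $Q_n^2$ with strictly larger rank is ever charged to $x$, so that summing over $\A$ counts each edge exactly once. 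Point (ii) requires a short case analysis distinguishing distance-one neighbors (removing one element of $x$, of which there are $|x|$) from distance-two neighbors obtained by a downward swap (of which there are $\sum_{j \in x} |\{i < j : i \notin x\}|$, and together with the down-steps $x \setminus \{i,j\}$ this must be reconciled with the target sum $\sum_{j\in x}(j-1)$ — a small counting identity that I expect to fall out once the swaps are indexed by the pair (removed element $j$, inserted position $i$) with $i < j$). Once this edge-counting identity is nailed down, the lemma follows immediately.
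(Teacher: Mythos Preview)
Your overall strategy---charge each edge $\{x,y\}\in\mathsf{E}_{\leq 2}(\A)$ to its endpoint of larger rank $\|\cdot\|$, and show that each $x\in\A$ has exactly $\|x\|$ lower-rank neighbours in $\A$---is exactly the paper's approach. The paper carries this out by a direct case split on $|y|$ versus $|x|$: there are $\|x\|-\binom{|x|+1}{2}$ swaps $y=(x\cup\{i\})\setminus\{j\}$ with $i<j$, $j\in x$, $i\notin x$, and $\binom{|x|+1}{2}$ sets of the form $x\setminus\{j\}$ or $x\setminus\{i,j\}$, summing to $\|x\|$.

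Your execution, however, contains a concrete error. You propose to index the lower-rank neighbours by pairs $(j,i)$ with $j\in x$ and $i<j$, sending $(j,i)$ to $(x\cup\{i\})\setminus\{j\}$ when $i\notin x$ and to $x\setminus\{j\}$ when $i\in x$. This map is \emph{not} injective: for fixed $j$, every $i<j$ with $i\in x$ is sent to the same set $x\setminus\{j\}$, so you are not producing $j-1$ distinct sets per $j$. Your later addition of ``$+|x|$'' does not repair this, and the double-removals $x\setminus\{i,j\}$ are left unaccounted for except as a loose end you ``expect to fall out''. The fix is easy: when $i\in x$, send $(j,i)$ to $x\setminus\{i,j\}$ instead of $x\setminus\{j\}$. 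Then the map \emph{is} injective onto the distance-two lower-rank neighbours (swaps have $|y|=|x|$, double-removals have $|y|=|x|-2$, and within each type the pair $\{i,j\}$ is recoverable), giving $\sum_{j\in x}(j-1)=\|x\|-|x|$ of them; adding the $|x|$ distance-one neighbours $x\setminus\{j\}$ yields $\|x\|$. With that one correction your argument is complete and essentially identical to the paper's.
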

\begin{proof} Notice that $\{x,y\} \in \mathsf{E}_{\leq 2}(\A)$ implies that either $\|y\| < \|x\|$ or vice versa. We fix $x \in \A$ and count $y$ such that
	$\|y\| < \|x\|$. Assume that $x \neq \emptyset, \{1\}$, or the bound is trivial. We separate the cases $|y| = |x|$ and $|y| < |x|$. In the first case, we count~$y$ of the form $y = x \cup \{i\} \setminus \{j\}$, where $i < j$, $j \in x$ and $i \notin x$. The number of such $y$ is exactly
	\[
	\sum_{j \in x} \Big(j - 1 - |\{i \in x:\ i < j\}|\Big) \ = \ \| x \| - \binom{|x|+1}{2}.
	\]
	For the second case, with $|y| < |x|$, there are $\binom{|x|+1}{2}$ choices for $y$ of the form $y = x \setminus \{i,j\}$ or $y = x \setminus \{i\}$, where $i,j \in x$. As we have assumed that $\A$ is a left-compressed down-set, the counted pairs in both cases are in $\mathsf{E}_{\leq 2}(\A)$. Summing over $x\in \A$ completes the proof.
\end{proof}

To obtain \thmref{distance-two} we use the left-compressedness and down-set conditions on $\A$ to find an upper bound of $\|x\|$ for each $x \in \A$ which depends only on $|\A|$ and $n$. The theorem then follows from summing these upper bounds over $x \in \A$. 
\begin{lemma}
	\lemlab{norm-bound} Let $\A\subset \bits$ be a left-compressed down-set with $|\A| \geq 2$. For any $x \in \A$, 
	\begin{equation*} %\label{eqn:norm}
	\|x\| \leq \dim \cdot  \ell',
	\end{equation*}
	where $\ell' = 
	\min\left\{
	\left\lceil 
	\frac{\log |\A|}{\log \dim - \log \log |\A|} 
	\right\rceil,
	\floor{\log |\A|}
	\right\}$
\end{lemma}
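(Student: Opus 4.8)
The plan is to bound $\|x\| = \sum_{j \in x} j$ for a fixed $x$ in the left-compressed down-set $\A$ by controlling, for each $j$, how many elements of $x$ can exceed $j$. Fix $x \in \A$ and write $|x| = w$; by \propref{max-weight} we have $w \le \floor{\log|\A|}$. The key structural observation is that, since $\A$ is a left-compressed down-set, for every subset $S \subseteq x$ and every way of shifting the elements of $S$ down to any smaller coordinate positions not already occupied, the resulting set also lies in $\A$. Concretely, for each threshold $\tau$, let $a_\tau := |\{j \in x : j > \tau\}|$ be the number of ``high'' coordinates of $x$; then $x$ has at most $\tau$ ``low'' slots available, and compressing the $a_\tau$ high elements down into the low range produces $\binom{\tau}{a_\tau}$ distinct elements of $\A$ (taking $x \cap [\tau]$ as the fixed low part and choosing where the shifted elements land, or a similar count). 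Hence $\binom{\tau}{a_\tau} \le |\A|$ for every $\tau$, which forces $a_\tau$ to be small once $\tau$ is a constant fraction of $\dim$.

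Next I would convert this into a bound on $\|x\|$. Split the sum $\|x\| = \sum_{j \in x} j$ according to whether $j \le \dim/2$ or $j > \dim/2$ (or more generally use a dyadic decomposition of $[\dim]$ into ranges $(\dim/2^{i+1}, \dim/2^{i}]$). Elements below $\dim/2$ contribute at most $(\dim/2)\cdot w \le \dim \cdot \floor{\log|\A|}$ trivially, which already gives the $\ell' = \floor{\log|\A|}$ branch of the minimum. For the other branch, I use the constraint $\binom{\tau}{a_\tau} \le |\A|$ with $\tau \asymp \dim$: since $\binom{\dim/2}{a} \ge (\dim/(2a))^a$, the inequality $(\dim/(2a))^a \le |\A|$ together with $a \le \log|\A|$ gives $a \le O\!\left(\frac{\log|\A|}{\log\dim - \log\log|\A|}\right)$, i.e. $a_\tau \lesssim \ell'$ for all $\tau \ge \dim/2$. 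Summing the dyadic contributions, the high coordinates contribute at most $\dim$ times the number of high coordinates, which is $\lesssim \dim \cdot \ell'$, and the low coordinates contribute at most $\dim$ times $\ell'$ as well after the same dyadic bookkeeping (each range of width $\dim/2^i$ holds at most $a_{\dim/2^{i+1}}$ elements, each of size at most $\dim/2^i$, and $\sum_i (\dim/2^i)\, a_{\dim/2^{i+1}}$ telescopes to $O(\dim \cdot \ell')$). I would use \propref{binomial-bound2} to handle the non-integrality of the exponents cleanly when extracting the bound on $a$ from $\binom{\tau}{a} \le |\A|$.

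The main obstacle is getting the counting argument for ``how many shifted copies of $x$ lie in $\A$'' exactly right — one must choose the right fixed part of $x$ and the right family of compressions so that the copies are genuinely distinct and genuinely in $\A$, and so that the resulting binomial coefficient $\binom{\tau}{a_\tau}$ is large enough to be useful. There is some tension here: taking $\tau$ large makes $\binom{\tau}{a_\tau}$ large (good for the bound on $a_\tau$) but then the ``low'' range is wide and its elements are expensive in $\|x\|$; this is precisely what the dyadic decomposition is designed to balance, applying the binomial constraint at each scale $\tau = \dim/2^{i}$ simultaneously. A secondary, more routine obstacle is verifying that the two branches of the $\min$ in $\ell'$ arise correctly — the ceiling term dominates when $|\A| \le 2^{\sqrt{\dim}}$ or so, and $\floor{\log|\A|}$ dominates otherwise — and that the ceiling in the definition of $\ell'$ absorbs all the additive $O(1)$ slack from the estimates above.
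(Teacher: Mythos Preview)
Your core idea --- use the left-compressed down-set property to show that many ``shifted copies'' of $x$ lie in $\A$, thereby bounding the number of large coordinates of $x$ --- is the same as the paper's. But the execution diverges, and your version loses the exact constant the lemma demands.

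The paper does \emph{not} use a dyadic decomposition. It picks a \emph{single} threshold $\beta' = \lfloor n\ell'/\log|\A|\rfloor$, writes $x = x' \cup x''$ with $x' \subseteq [\beta']$ and $x'' \subseteq \{\beta'+1,\dots,n\}$, and counts the sets $y' \cup y''$ with $y' \subseteq x'$ and $y'' \subseteq ([\beta']\setminus x') \cup x''$, $|y''|\le |x''|$. This gives the constraint
\[
2^{|x'|}\sum_{j=0}^{|x''|}\binom{\beta'+|x''|-|x'|}{j} \le |\A|.
\]
Writing $2^{|x'|} = |\A|^{\eps_x}$, the paper deduces $|x''| < (1-\eps_x)\ell'$, and then
\[
\|x\| \le \beta'|x'| + n|x''| \le \eps_x\, n\ell' + (1-\eps_x)\,n\ell' = n\ell',
\]
with constant exactly $1$. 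The factor $2^{|x'|}$ --- that is, the subsets of the low part --- is essential: it is precisely what makes the bound on $|x''|$ tighten to $(1-\eps_x)\ell'$ rather than just $\ell'$, so that the two contributions balance exactly.

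Your dyadic scheme, by contrast, applies the cruder constraint $\binom{\tau}{a_\tau} \le |\A|$ at each scale $\tau = n/2^{i}$ and sums $\sum_i (n/2^i)\,a_{n/2^{i+1}}$. Each scale already costs you a constant (the $\binom{n/2}{a}\ge (n/(2a))^a$ step turns $\log n$ into $\log n - 1$ in the denominator), and the geometric sum over scales contributes another. You end up with $\|x\| \le C\,n\ell'$ for some absolute $C>1$. Your hope that ``the ceiling in the definition of $\ell'$ absorbs all the additive $O(1)$ slack'' does not work: the ceiling adds at most $1$ to $\tfrac{\log|\A|}{\log n - \log\log|\A|}$, an \emph{additive} correction to $\ell'$, whereas your losses are \emph{multiplicative} in $n\ell'$. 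A secondary issue: your counting phrase ``taking $x\cap[\tau]$ as the fixed low part'' actually yields $\binom{\tau - |x\cap[\tau]|}{a_\tau}$, not $\binom{\tau}{a_\tau}$; you can get $\binom{\tau}{a_\tau}$ by first passing to $x'' = x\setminus[\tau]\in\A$ and then compressing, but as written the count is off.

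If you only wanted $\|x\| = O(n\ell')$, your outline would be fine (and arguably cleaner than the paper's case analysis). To get the stated lemma with constant $1$, you need the single-threshold-plus-$\eps_x$ trick.
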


Assuming this lemma, we now complete the proof of \thmref{distance-two}. 

\begin{proof}[Proof of \thmref{distance-two}] Applying \propref{compressed}, we may assume that $\A$ is a left-compressed down-set. %We may also assume $|\A| > n$, otherwise the theorem holds trivially.  
	Then, \lemref{norm} and \lemref{norm-bound} together imply the desired bound:
	\[ 
	\mathsf{e}_{\leq 2}(\A) 
	= \sum_{x \in \A} \| x\|  
	\leq \dim \cdot  \ell' \cdot |\A|.
	\]
\end{proof}

\noindent
\textbf{Approximately solving the Kleitman-West problem.} \thmref{distance-two} has the following immediate corollary for the isoperimetric problem on the Kleitman-West graph, i.e., the graph on ${[n] \choose k}$ where two $k$-element sets are joined if they have symmetric difference of size two. For $\A \subset {[n] \choose k}$, let $\mathsf{e}(\A)$ denote the number of edges of this graph induced by $\A$.

\begin{theorem}
	\thmlab{kwc}
	Let $\A \subset {[n] \choose k}$ be nonempty. 
	Then
	$$
	\mathsf{e}(\A) 
	\leq \dim 
	\cdot \ell' 
	\cdot |\A|,
	$$
	where 
	$\ell': = 
	\min\left\{
	\left\lceil 
	\frac{\log |\A|}{\log \dim - \log \log |\A|} 
	\right\rceil,
	\floor{\log |\A|}
	\right\}$.
\end{theorem}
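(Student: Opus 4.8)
The plan is to deduce Theorem~\thmref{kwc} directly from Theorem~\thmref{distance-two}, using the fact that the Kleitman--West graph is nothing but the subgraph of $Q_\dim^2$ induced on a single Hamming layer. Concretely, I would identify $\A \subseteq \binom{[\dim]}{k}$ with a subset of $\bits$ via the bijection $x \leftrightarrow \{i : x_i = 1\}$ used throughout the paper, so that every element of $\A$ has Hamming weight exactly $k$. Any two distinct $k$-sets have symmetric difference of even size at least $2$; in particular, no two elements of $\A$ are at Hamming distance $1$, so a pair of elements of $\A$ is at Hamming distance at most $2$ in $Q_\dim^2$ if and only if it is at Hamming distance exactly $2$, i.e.\ if and only if it is an edge of the Kleitman--West graph. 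Hence $\mathsf{e}(\A) = \mathsf{e}_{\leq 2}(\A)$, where the right-hand side now refers to $\A$ viewed as a subset of $\bits$.

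It then remains only to check the hypothesis $1 \leq \log|\A| < \dim$ of Theorem~\thmref{distance-two}. If $|\A| = 1$ (which includes the degenerate cases $\dim \in \{0,1\}$, where every layer is a singleton), then $\mathsf{e}(\A) = 0$ and the asserted inequality holds regardless of the value of $\ell'$. If $\dim \geq 2$ and $|\A| \geq 2$, then $\log|\A| \geq 1$, and since $|\A| \leq \binom{\dim}{k} \leq \binom{\dim}{\lfloor \dim/2\rfloor} < 2^{\dim}$ we also have $\log|\A| < \dim$; note this in particular guarantees $\log\dim - \log\log|\A| > 0$, so the defining expression for $\ell'$ is well posed. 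Applying Theorem~\thmref{distance-two} to $\A \subseteq \bits$ then gives
\[
\mathsf{e}(\A) = \mathsf{e}_{\leq 2}(\A) \leq \dim \cdot \ell' \cdot |\A|,
\]
with $\ell'$ exactly the quantity appearing in the statement of Theorem~\thmref{kwc}, since that formula depends only on $|\A|$ and $\dim$. This completes the argument.

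I do not anticipate any real obstacle: the only points requiring care are the parity observation that the Kleitman--West edge set coincides with $\mathsf{E}_{\leq 2}$ on a fixed layer, and the bookkeeping for the small case $|\A| = 1$ together with the boundary behaviour of the $\log\log|\A|$ term. If a self-contained argument were preferred, one could instead replay the proof of Theorem~\thmref{distance-two} verbatim: pass to a left-compressed down-set of the same size via Proposition~\propref{compressed}, use Lemma~\lemref{norm} to write $\mathsf{e}_{\leq 2}(\A) = \sum_{x \in \A}\|x\|$, and bound each rank by $\dim\cdot\ell'$ via Lemma~\lemref{norm-bound}. But this is strictly more work than the one-line reduction above, which is why I would present Theorem~\thmref{kwc} as an immediate corollary.
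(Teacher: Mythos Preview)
Your proposal is correct and matches the paper's approach exactly: the paper presents \thmref{kwc} as an ``immediate corollary'' of \thmref{distance-two} with no further argument, and your reduction---identifying the Kleitman--West edges with $\mathsf{E}_{\leq 2}$ restricted to a single layer via the parity observation, then checking the hypothesis $1 \leq \log|\A| < \dim$---is precisely the intended one-line deduction.
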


We remark that \thmref{kwc} is sharp up to a factor of $2+o(1)$. This is evidenced by the families defined by
$\left\{x \in {[n] \choose k}:\ [s] \subset x\right\}$ for $k=o(n)$ and $s \in \mathbb{N}$.
\newcommand{\leqB}{\leq_\b}

\subsection{Proof of \lemref{norm-bound}}
%\begin{proof}[Proof of \lemref{norm-bound}]
	\propref{max-weight} implies that $|x| \leq \floor{\log |\A|}$, and thus, $\|x\| \leq \dim|x| \leq \dim\floor{\log |\A|}$. Therefore, we may assume that we are in the case where $\ell' =\ceil{\frac{\log |\A|}{\log \dim - \log \log |\A|}} < \floor{\log |\A|}$. We note for later use that since $\ell' = \ceil{\frac{\log |\A|}{\log \dim - \log \log |\A|}} < \floor{\log|\A|}$, we have

	\begin{equation} \label{usefulbound}
		%\left \lceil \frac{\log |\A|}{\log n - \log \log |\A|} \right \rceil \leq \left \lfloor \log |\A| \right \rfloor \Rightarrow 
		2 < \frac{n}{\log |\A|}.
	\end{equation}
	
	We use the fact that $\A$ is a left-compressed down-set to lower bound the number of $y \in \A$ that are guaranteed in $\A$ by the existence of $x \in \A$. To this end, define $\b' := \left\lfloor \frac{\dim\ell'}{\log |\A|}\right\rfloor$, and let $x = x' \cup x''$, where $x' \subseteq \{1,\ldots, \b'\}$ and $x'' \subseteq \{\b' + 1,\ldots, \dim\}$ correspond to the integers in $x$ with values at most $\b'$ and at least $\b' +1$, respectively (so that $|x| = |x'| + |x''|$). We will show that 
	\begin{equation*}\label{norm-bound-in-lemma}
	\|x\| \leq \b'|x'| + \dim|x''| \leq \dim\ell'.
	\end{equation*}
	Notice that if $|x''| = 0$, then 
	$ 
	\|x\| = \b'|x'| = \b'|x| \leq \b' \log |\A| \leq \dim\ell',
	$ where the inequalities use \propref{max-weight} and the definition of $\b'$. Thus, we may assume that $|x'| \leq |x|-1$ and $|x''| \geq 1$. 
	
	Consider $y \in \bits$ of the form $y = y' \cup y''$, where $y' \subseteq x'$, $y'' \subseteq \left([\beta']\setminus x'\right) \cup x''$, and $|y''| \leq |x''|$. We claim every $y$ of this form is in $\A$. Indeed, this follows directly from the left-compressed down-set assumption. To count such $y \in \A$, first define $\eps_x \in [0,1)$ as the real number satisfying $2^{|x'|} = |\A|^{\eps_x}$. We will show $|x''| \leq (1-\eps_x)\ell'$. %Now the choices for $y', y'', y'''$. 
	Clearly, there are $2^{|x'|} = |\A|^{\eps_x}$ choices for $y' \subseteq x'$ and 
	\[
	\text{\# of choices for } y'' = \sum_{j = 0}^{|x''|} \binom{\b' + |x''| - |x'|}{j},
	\]
	where the $\jth$ term counts $y''$ with $|y''| = j$.  Since the choice of $y'$ is independent of $y''$, we know that the sum above must be at most $|\A|^{1-\eps_x}$, otherwise we would have guaranteed more than $|\A|$ distinct $y$ in $\A$.

	Aiming for a contradiction, we suppose that $|x''| \geq \ceil{(1-\eps_x)\ell'}$ and $\eps_x \leq 1 - 1/\ell'$. It is a standard fact that for $a,b \in \mathbb{N}$ where $a \geq b \geq 1$ we have $\binom{a}{b} \geq \left( \frac{a}{b}\right)^b$. This fact and the assumption $|x''| \geq \ceil{(1-\eps_x)\ell'}$ imply the lower bound

\begin{align}
	\sum_{j = 0}^{|x''|} \binom{\b' + |x''| - |x'|}{j} &\geq 
		\left( \frac{\beta' + |x''| - |x'|}{\left \lceil (1- \eps_x)\ell' \right \rceil }\right)^{\left \lceil (1- \eps_x)\ell' \right \rceil} + \left( \frac{\beta' + |x''| - |x'|}{\left \lceil (1- \eps_x)\ell' \right \rceil-1 }\right)^{\left \lceil (1- \eps_x)\ell' \right \rceil - 1}\\
	\label{eqn:sumlowbound}
	&\geq \left ( \frac{\b' + |x''| - |x'|}{(1- \eps_x)\ell'}\right)^{(1-\eps_x)\ell'},
\end{align}
where the final inequality follows by applying \propref{binomial-bound2}.

	 We note that if $a>2$ then $\frac{\left \lfloor a\right \rfloor}{\log a} \geq \frac{2}{\log(3)}$. Using our observation in equation (\ref{usefulbound}) we apply this fact to the definition of $\b'$ to see
	\begin{equation}\label{eqn:betafact}
	\beta' = \left \lfloor \frac{n\ell'}{\log |\A|} \right \rfloor\geq \left \lfloor \frac{n}{\log|\A|}\right \rfloor \left \lceil \frac{\log |\A|}{\log n - \log \log |\A|} \right \rceil  \geq \frac{2}{\log(3)} \log |\A|.
	\end{equation}
\noindent
Observe that (\ref{eqn:betafact}) and the fact $|x'| = \eps_x \log |\A|$ together imply $\b' - |x'| \geq (1-\frac{\log 3}{2}\eps_x)\b'$. 

We now split into the following cases:

(1) $|x'| \geq 4$,

(2) $2 \leq |x'| \leq 3$,

(3) $|x'| \leq 1$.

\paragraph{Case (1): $|x'| \geq 4$.} We note that $|x'| \geq 4$ is equivalent to $\eps_x \log |\A| \geq 4$ and this implies $\eps_x > \frac{\log 3}{(2-\log 3) \log |\A|}$, which after rearranging is equivalent to $\frac{2- \log 3}{2}\eps_x > \frac{\log 3}{2 \log |\A|}$. Using inequality (\ref{eqn:betafact}), and that $1/(1-\eps_x) \geq 1$, we see $\frac{2- \log 3}{2(1-\eps_x)}\eps_x > \frac{1}{\b'}$. Now, by the definition of~$\b'$, the right hand side of this inequality trivially satisfies

\begin{equation}\label{betatrick}
\frac{1}{\b'} \geq \frac{\frac{n \ell'}{ \log |\A|} - \b'}{\b'},
\end{equation}
so rearranging we see that
\begin{equation*}
\left( \frac{(1 - \frac{\log 3}{2}\eps_x) \b'}{(1-\eps_x) \ell'} \right) = \left( 1 + \frac{2- \log 3}{2(1- \eps_x)}\eps_x\right) \frac{\beta'}{\ell'} > \frac{n}{\log |\A|}.
\end{equation*}
Using our observation that $\b' - |x'| \geq (1-\frac{\log 3}{2}\eps_x)\b'$ we arrive at
\begin{equation*}
\frac{\b' +|x''| - |x'|}{(1-\eps_x)\ell'} > \frac{n}{\log |\A|}.
\end{equation*}
Substituting this into the lower bound (\ref{eqn:sumlowbound}) we see
\begin{equation*}
\sum_{j = 0}^{|x''|} \binom{\b' + |x''| - |x'|}{j} > \left( \frac{n}{\log |\A|} \right)^{(1-\eps_x)\ell'} \geq |\A|^{1- \eps_x},
\end{equation*}
giving the required contradiction. 

\paragraph{Case (2): $2 \leq |x'| \leq 3$.} As $|x'| \leq 3$ we have $|x''| \geq 1 \geq |x'|/3$, and so
$$\b' + |x''| - |x'| \geq \b' - 2|x'|/3.$$
We combine this with fact (\ref{eqn:betafact}) to get $\b' + |x''| - |x'| \geq (1- \frac{\log3}{3}\eps_x) \b'$. Therefore
\begin{equation} \label{strongerbound}
\frac{\b' + |x''| - |x'|}{(1-\eps_x)\ell'} \geq \left(\frac{1 - \frac{\log 3}{3} \eps_x}{1- \eps_x}\right)\frac{\b'}{\ell'} = \left( 1 + \frac{3 - \log 3}{3(1- \eps_x)}\eps_x\right) \frac{\b'}{\ell'}.
\end{equation}
Now, since $|x'| \geq 2$ is equivalent to $\eps_x \log |\A| \geq 2$ we see $\eps_x > \frac{3 \log 3}{2(3 - \log 3) \log |\A|}$ which after rearranging is equivalent to $\frac{3- \log3}{3} \eps_x > \frac{\log 3}{2 \log |\A|}$. Using inequality (\ref{eqn:betafact}), and that $1/(1-\eps_x) \geq 1$, we see $\frac{3-\log 3}{3(1-\eps_x)}\eps_x > \frac{1}{\b'}$. Now, as in the previous case, we appeal to equation (\ref{betatrick}) and rearrange to find
\begin{equation*}
\left( 1 + \frac{3-\log 3}{3(1-\eps_x)}\eps_x \right) \frac{\beta'}{ \ell'} > \frac{n}{\log |\A|}.
\end{equation*}
Combining this with the inequality (\ref{strongerbound}) we find again $\frac{\b' + |x''| - |x'|}{(1- \eps_x)\ell'} > \frac{n}{\log |\A|}$. Substituting this into the lower bound (\ref{eqn:sumlowbound}) gives the required contradiction.

\paragraph{Case (3): $|x'| \leq 1$.} Suppose first that $|x'| = 0$, and so $\eps_x = 0$. Then by assumption $|x''| \geq \left \lceil \ell' \right \rceil$. Hence

\begin{align*}
\sum_{j = 0}^{|x''|} \binom{\b' + |x''| - |x'|}{j} &\geq \binom{\b' + |x''|}{\ell'} + \binom{\b' +|x''|}{\ell' - 1} = \binom{\b' + |x''| + 1}{\ell'}\\
& \geq \left( \frac{\b' + |x''| + 1}{\ell'}\right)^{\ell'},
\end{align*}
and since $\b' +|x''|+1 = \left \lfloor \frac{n\ell'}{\log |\A|} \right \rfloor+ |x''| + 1 > \frac{n\ell'}{\log |\A|}$ we see that

\begin{equation*}
\sum_{j = 0}^{|x''|} \binom{\b' + |x''| - |x'|}{j} > \left( \frac{n}{\log |\A|}\right)^{\ell'} \geq |\A|,
\end{equation*}
providing the required contradiction.

Secondly, we suppose that $|x'| = 1 \leq |x''|$. In this case, we have
\begin{equation*}
\sum_{j = 0}^{|x''|} \binom{\b' + |x''| - |x'|}{j} \geq \left( \frac{\b' + |x''| - |x'|}{(1-\eps_x)\ell'} \right)^{(1-\eps_x)\ell'} \geq \left( \frac{\beta'}{(1-\eps_x)\ell'} \right)^{(1-\eps_x)\ell'}.
\end{equation*}
Now $|x'| \geq 1$ is equivalent to $\eps_x \log |\A| \geq 1$ which implies $\eps_x > \frac{\log 3}{2 \log |\A|}$. Using inequality (\ref{eqn:betafact}) we see $\eps_x > 1/\b'$, which implies $\frac{\b'}{1- \eps_x} > \frac{n\ell'}{\log |\A|}$. Thus, if $|x'| = 1 \leq |x''|$ then
\begin{equation*}
\sum_{j = 0}^{|x''|} \binom{\b' + |x''| - |x'|}{j} > \left( \frac{n}{\log |\A|} \right)^{(1-\eps_x)\ell'} = |\A|^{(1-\eps_x)},
\end{equation*}
again giving a contradiction.

Since in every case we arrive at a contradiction, the assumption $|x''| \geq \ceil{(1-\eps_x)\ell'}$ is false and so we must have $|x''| \leq \ceil{(1-\eps_x)\ell'} - 1 < (1-\eps_x)\ell'$, and thus we conclude that
	\[
	\|x\| \ \leq\ \b'|x'| + \dim|x''| 
	\ =\ \b'  \eps_x \log |\A| + \dim|x''|
	\ \leq\ \eps_x \dim\ell' +  (1-\eps_x)\dim\ell' \ =\ \dim\ell'.
	\]
%\end{proof}

\section{The general case for even distances}

In this section, we prove \thmref{even-power-approx}, which, using the notation defined in \secref{notation}, is equivalent to the statement that if $\A \subset \{0,1\}^{\dim}$ and $t \in \mathbb{N}$ with $t \leq \log |\A|$, then
	$$
	|\mathsf{E}_{\leq 2t}(\A)| := \mathsf{e}_{\leq 2t}(\A) \ \leq\  
	\left(\frac{8e}{t}\right)^{2t} 
	\cdot \left(\dim \cdot \ell  \right)^t 
	\cdot |\A|,
	$$
	where 
	$$\ell = \ell(\A) := 
\min\left\{
\left\lceil 
\frac{2\log |\A|}{\log n - \log \log |\A|} 
\right\rceil,
\floor{\log |\A|}
\right\}.$$
We start with some more notation. For $(b,a) \in \mathbb{Z}_{\geq 0}^2$, let 
$$\mathsf{E}_{(b,a)}(\A) := \{\{x,y\} \in \mathsf{E}_{\leq 2t}(\A):\ |x \setminus y| = b,\ |y \setminus x|=a\}.$$
and define
$\mathsf{e}_{(b,a)}(\A) := |\mathsf{E}_{(b,a)}(\A)|.$
Letting 
\[
\mathcal{\calU} = \{(b,a) \in \mathbb{Z}_{\geq 0}^2:\ b \geq a\ \text{and}\ b+a \leq 2t\},
\] 
observe that we can decompose $\mathsf{E}_{\leq 2t}(\A)$ as a disjoint union 
\[ 
\mathsf{E}_{\leq 2t}(\A) = \bigcup_{(b,a) \in \calU} \mathsf{E}_{(b,a)}(\A),
\] 
and in particular, this implies,
\begin{equation} \label{eqn:sum-ab-bound}
\mathsf{e}_{\leq 2t}(\A) 
= \sum_{(b,a) \in \calU} \mathsf{e}_{(b,a)}(\A).
\end{equation}

Our strategy will be to prove upper bounds on $\mathsf{e}_{(b,a)}(\A)$, and then combine these to obtain the theorem. We will need a variant of the bound on $|x''|$ from the proof of \lemref{norm-bound}. In what follows, we express our results using integers $\ell := \ell(\A)$ and $\b := \b(\A)$, defined in the next proposition. %When the set $\A$ is clear from context, we will let $\ell = \ell(\A)$ and $\b = \b(\A)$ be defined as in \propref{sB-bounds}. 
We also define $\ell_x := |x \cap \{\b+1,\ldots,\dim\}|$ for $x \in \A$. Intuitively, $\b$ is the threshold for `big' elements; $\ell_x$ is the number of these `big' elements; and, we will show that $\ell_x \leq \ell$.

\begin{proposition}\proplab{sB-bounds} Let $n \geq 2$ and $\A \subset \bits$ be a down-set with $|\A| \geq 2$. Let 
	$$
	\ell = 
	\min\left\{
	\left\lceil 
	\frac{2\log |\A|}{\log \dim - \log \log |\A|} 
	\right\rceil,
	\floor{\log |\A|}
	\right\},
	\quad 
	\b = 
	\left\lfloor \left(\frac{ \dim}{\log |\A|}\right)^{1/2} \ell \right \rfloor.
	$$ 
	For any $x \in \A$, we have the following: 

	(i) $|x|\cdot \b \leq \dim\ell$,

	(ii) $\b^2 \leq \dim\ell$,

	(iii) $\log^{2} |\A| \leq \frac{n}{n-1} \dim \ell$,

	(iv) $|x|^2 \leq n\ell$,

	(v) $\left \lfloor \log |\A| \right \rfloor \log |\A| \leq n\ell$.
\end{proposition}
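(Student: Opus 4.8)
The plan is to reduce all five inequalities to a few elementary observations plus one delicate rounding estimate. Write $M := \log|\A|$, so $1 \le M \le \dim$ (the upper bound because $|\A| \le 2^{\dim}$). The facts I will use throughout are: $\ell \le \floor{\log|\A|} \le M$, directly from the definition of $\ell$; $|x| \le \floor{\log|\A|} \le M$ for every $x \in \A$, by \propref{max-weight} (applicable since $\A$ is a down-set); and the trivial estimate $\b \le (\dim/M)^{1/2}\,\ell$. The point is that (i), (ii) and (iv) each reduce in a line or two to (v) together with these facts, so the real content lies in (v) and (iii); accordingly I would prove them in the order (v), (iv), (iii), (ii), (i).

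For (v), note that $\ell$ equals one of the two terms defining it. If $\ell = \floor{\log|\A|}$, then $\floor{\log|\A|}\log|\A| \le \dim\floor{\log|\A|} = \dim\ell$ since $M \le \dim$. Otherwise $\ell = \ceil{\frac{2\log|\A|}{\log\dim - \log\log|\A|}}$, and since this is finite it forces $\dim > M$; putting $u := \dim/M > 1$ and using $\log\dim - \log\log|\A| = \log u$, we get $\dim\ell \ge \frac{2\dim M}{\log u} = \frac{2uM^2}{\log u} \ge M^2 \ge \floor{\log|\A|}\log|\A|$, where the middle step uses the elementary inequality $\log u < u$ (valid for all $u>0$). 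Inequality (iv) is then immediate: $|x|^2 \le \floor{\log|\A|}^2 \le \floor{\log|\A|}\log|\A| \le \dim\ell$ by (v).

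For (iii) I would split on the same dichotomy. In the ``ceiling'' case the computation just done already gives $\log^2|\A| = M^2 \le \dim\ell \le \frac{\dim}{\dim-1}\dim\ell$, so nothing more is needed. The ``floor'' case $\ell = \floor{\log|\A|} =: k$ is the crux of the proposition, and the only place the factor $\frac{\dim}{\dim-1}$ is actually used; here $k \le M < k+1$ and $1 \le k \le \dim$. If $k = \dim$ then $M = \dim$, so $\log^2|\A| = \dim^2 \le \frac{\dim^3}{\dim-1} = \frac{\dim}{\dim-1}\dim\ell$. If $k \le \dim-1$, then $M^2 < (k+1)^2$, and since $k \mapsto \frac{(k+1)^2}{k} = k + 2 + \frac1k$ is increasing on $[1,\infty)$ it is maximized over $1 \le k \le \dim-1$ at $k = \dim-1$, where it equals $\frac{\dim^2}{\dim-1}$; hence $\log^2|\A| = M^2 < \frac{(k+1)^2}{k}\cdot k \le \frac{\dim^2}{\dim-1}\,k = \frac{\dim}{\dim-1}\dim\ell$. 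This rounding bookkeeping is the step I expect to be the main obstacle: one is forced to give up the factor $\frac{\dim}{\dim-1}$ precisely because $\log|\A|$ can sit just below an integer while $\ell$ only records its floor, so the naive hope $\log^2|\A| \le \dim\ell$ is genuinely false.

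Finally, (ii) follows from $\b^2 \le \frac{\dim}{M}\ell^2 = \dim\ell\cdot\frac{\ell}{M} \le \dim\ell$ since $\ell \le M$; and (i) from $|x|\cdot\b \le M\cdot(\dim/M)^{1/2}\ell = (\dim M)^{1/2}\ell \le \dim\ell$ since $M \le \dim$. This disposes of all five parts; apart from the two-case split, the only non-obvious ingredients are the estimate $\log u < u$ in the ceiling case and the monotonicity of $(k+1)^2/k$ in the floor case of (iii).
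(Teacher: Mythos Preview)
Your proof is correct and follows essentially the same approach as the paper: both split on whether $\ell$ equals the ceiling term or the floor term, use $\log u < u$ (equivalently $\log(n/\log|\A|) \le n/\log|\A|$) in the ceiling case, and handle the floor case of (iii) by elementary rounding estimates. Your organization is slightly cleaner---you prove (v) first and derive (iv) from it, and you handle the floor case of (iii) via the monotonicity of $(k+1)^2/k$ rather than the paper's quadratic-inequality check with a separate small-$\log|\A|$ sub-case---but these are cosmetic differences.
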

\begin{proof} Parts (i) and (ii) follow immediately from \propref{max-weight}, the fact that $\log |\A| \leq n$ and the definitions of $\b$ and $\ell$.

For part (iii), since $ \log (n/\log |\A|) \leq n/\log |\A|$ we see that
\begin{equation*}
\log^2 |\A| \leq \frac{n \log |\A|}{\log(n/\log |\A|)}.
\end{equation*}
Hence, if $\ell = \left \lceil \frac{2 \log |\A|}{\log n - \log \log |\A|} \right \rceil$ then $\ell \geq \frac{\log |\A|}{\log(n/\log |\A|)}$ and we see the stronger statement $\log^2 |\A| \leq n\ell$ holds, and we note this for later. On the other hand, if $\ell = \left \lfloor \log |\A| \right \rfloor < \left \lceil \frac{2 \log |\A|}{\log n - \log \log |\A|} \right \rceil$, then $n\ell \geq n( \log |\A| - 1)$, so it is sufficient to show $\frac{n}{n-1} n (\log |\A| - 1) \geq \log^2 |\A|$, which is true if and only if $\frac{n}{n-1} \leq \log |\A| \leq n$.

Therefore, the only remaining cases to check are when $1 \leq \log |\A| < \frac{n}{n-1}$. Under this assumption, $\ell = 1$ and $\log^2 |\A| < \left(\frac{n}{n-1}\right)^2$, so as $n \geq 2$ we see that $\frac{n^2}{n-1} \geq \left(\frac{n}{n-1}\right)^2$ which in turn shows $\frac{n}{n-1} n\ell \geq \log^2 |\A|$ as required.

For part (iv) let $x\in \A$. We have already seen $|x| \leq \left \lfloor \log |\A|\right \rfloor$ and $|x| \leq n$ is trivial. If $\ell = \left \lceil \frac{2\log |\A|}{\log n - \log \log |\A|} \right \rceil$, we recall that $\log^2|\A| \leq n\ell$, and so $|x|^2 \leq n\ell$. On the other hand, if $\ell=\left \lfloor \log |\A|\right \rfloor$, then $|x|^2 \leq n\ell$. This proves (iv).

Finally, for part (v), again recall that if $\ell = \left \lceil \frac{2 \log |\A|}{\log n - \log \log |\A|} \right \rceil$ then $\left \lfloor \log |\A| \right \rfloor \log |\A| \leq \log^2 |\A| \leq n\ell$ and so $\left \lfloor \log |\A| \right \rfloor \log |\A| \leq n \ell$ follows. On the other hand if $\ell = \left \lfloor \log |\A| \right \rfloor$, then as $\log |\A| \leq n$ we see $\left \lfloor \log |\A| \right \rfloor \log |\A| \leq n\ell$, completing the proof of (v).
\end{proof}

\begin{lemma}
	\lemlab{few-large} \lemlab{sum-bound} 
	Let $\A \subset \bits$, $|\A| \geq 2$ be a left-compressed down-set. 
	%with $2 \leq \log |\A| < \dim $. 
	If $x \in \A$, then 
	%\begin{equation}
	%\ell_x = |x \cap  \{\b + 1,\ldots, \dim\}| \leq \ell.
	%\end{equation}
	$\ell_x \leq \ell.$
\end{lemma}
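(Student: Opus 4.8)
The plan is to mirror the counting argument used in the proof of \lemref{norm-bound}, but now carried out at the threshold $\b = \left\lfloor (\dim/\log|\A|)^{1/2}\,\ell\right\rfloor$ rather than $\b' = \lfloor \dim\ell'/\log|\A|\rfloor$. Write $x = x' \cup x''$ with $x' \subseteq [\b]$ and $x'' \subseteq \{\b+1,\dots,\dim\}$, so that $|x''| = \ell_x$ is exactly the quantity to be bounded. As in the earlier lemma, the left-compressed down-set hypothesis guarantees that every $y = y' \cup y''$ with $y' \subseteq x'$, $y'' \subseteq ([\b]\setminus x') \cup x''$ and $|y''| \leq |x''|$ lies in $\A$. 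Fixing $\eps_x \in [0,1)$ by $2^{|x'|} = |\A|^{\eps_x}$, the number of admissible $y''$ is $\sum_{j=0}^{|x''|}\binom{\b + |x''| - |x'|}{j}$, and since the choices of $y'$ and $y''$ are independent this sum must be at most $|\A|^{1-\eps_x}$, for otherwise we would have exhibited more than $|\A|$ elements of $\A$.

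Next I would argue by contradiction: assume $\ell_x = |x''| \geq \lceil (1-\eps_x)\ell\rceil$. Using $\binom{a}{b} \geq (a/b)^b$ together with \propref{binomial-bound2}, exactly as in \Eqref{sumlowbound}, one gets the lower bound $\sum_{j=0}^{|x''|}\binom{\b+|x''|-|x'|}{j} \geq \bigl(\frac{\b + |x''| - |x'|}{(1-\eps_x)\ell}\bigr)^{(1-\eps_x)\ell}$. It then suffices to show that $\frac{\b + |x''| - |x'|}{(1-\eps_x)\ell} > \bigl(\frac{\dim}{\log|\A|}\bigr)^{1/2}$, since raising this to the power $(1-\eps_x)\ell$ and using the definition of $\b$ (so that $\b$ is within $1$ of $(\dim/\log|\A|)^{1/2}\ell$, and $|\A|^{1-\eps_x} \le (\dim/\log|\A|)^{(1-\eps_x)\ell}$ will follow from part (iii) of \propref{sB-bounds}, roughly $\log^2|\A| \lesssim \dim\ell$) yields more than $|\A|^{1-\eps_x}$ guaranteed elements, the contradiction. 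So the whole argument reduces to controlling the ratio $(\b + |x''| - |x'|)/((1-\eps_x)\ell)$ from below by $(\dim/\log|\A|)^{1/2}$, using $|x'| = \eps_x\log|\A|$, $|x''| \geq 1$ (the case $|x''|=0$ being handled separately and trivially via part (i)), and the relation $\b \approx (\dim/\log|\A|)^{1/2}\ell \gtrsim \log|\A|$ coming from part (iii).

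The main obstacle, just as in \lemref{norm-bound}, is that the naive estimate $\b - |x'| \geq \b - \eps_x\log|\A|$ is not by itself strong enough when $\eps_x$ is close to $1$; one has to exploit the extra slack coming from $|x''| \geq 1$ and from $\b$ being comfortably larger than $\log|\A|$, and most likely split into cases according to the size of $|x'|$ (e.g.\ $|x'|$ large versus $|x'| \in \{0,1\}$ versus small intermediate values), absorbing the $\eps_x$-dependent loss in each regime. The bookkeeping is the genuinely delicate part; the structure of the argument — reduce to a ratio inequality, feed it through \propref{binomial-bound2}, and close the case $|x''| = 0$ by \propref{sB-bounds}(i) — is otherwise a direct adaptation of the $r=2$ proof. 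I would also need to keep track of the boundary situation where $\ell = \lfloor \log|\A|\rfloor$ (so $\b$ is built from that value), but here $\ell_x \le |x| \le \lfloor\log|\A|\rfloor = \ell$ holds for free by \propref{max-weight}, so that case needs no work.
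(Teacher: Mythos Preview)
Your plan is workable in principle but imports far more machinery from the proof of \lemref{norm-bound} than this lemma actually needs, and as a result you are bracing for a ``genuinely delicate'' case analysis that the paper avoids entirely. The point is that here the target is just $\ell_x \leq \ell$, not the finer weighted bound $\|x\| \leq \dim\ell'$; so there is no reason to keep track of $x'$, $\eps_x$, or to aim for the stronger conclusion $\ell_x < (1-\eps_x)\ell$.

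The paper's argument is this. Since $\A$ is a down-set, $x'' \in \A$. Every $y \subseteq [\b] \cup x''$ with $|y| \leq \ell_x$ then lies in $\A$ by left-compression and the down-set property (starting from $x''$, swap some large coordinates for small ones, then delete the rest). This already gives
\[
|\A| \ \geq\ \sum_{j=0}^{\ell_x} \binom{\b + \ell_x}{j},
\]
with no $|x'|$ in sight. Now assume $\ell_x \geq \ell + 1$ for a contradiction, keep only the terms $j=\ell$ and $j=\ell-1$, apply $\binom{a}{b}\geq (a/b)^b$ and then \propref{binomial-bound2} with $m+\lambda = 2\log|\A|/\log(\dim/\log|\A|)$. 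Since $\b + \ell_x \geq \b + \ell + 1 \geq \bigl((\dim/\log|\A|)^{1/2}+1\bigr)\ell$ and $\ell \geq 2\log|\A|/\log(\dim/\log|\A|)$, the base of the resulting power exceeds $(\dim/\log|\A|)^{1/2}$, and raising to the exponent $2\log|\A|/\log(\dim/\log|\A|)$ gives strictly more than $|\A|$ --- done, with no cases.

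So the single idea separating the two arguments is: throw away $x'$ immediately via the down-set property and count from $x''$ alone. Your route, by retaining $x'$ and splitting on its size, would (if pushed through) establish a strictly stronger bound than the lemma asserts, at the cost of exactly the bookkeeping you anticipate; for the statement as written, that effort is unnecessary.
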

\begin{proof}
	\propref{max-weight} implies $|x| \leq \floor{\log |\A|}$, and clearly $\ell_x \leq |x|$, so we may assume that we are in the case when $\ell =\ceil{\frac{2\log |\A|}{\log \dim - \log \log |\A|}}$.
	Let $x = x' \cup x''$ where $x' \subseteq  \{1,\ldots, \b\}$ and $x'' \subseteq  \{\b + 1,\ldots, \dim\}$. By definition, $|x''| = \ell_x$, and since $\A$ is a down-set, we know that $x'' \in \A$. Suppose $y \subseteq [\beta] \cup x''$ with $|y| \leq \ell_x$. As $\A$ is left-compressed and a down-set $y \in \A$. Counting such $y$ we have
	\begin{equation}
	\label{ycount}
	|\A| \geq \sum_{j=0}^{\ell_x} \binom{\b +\ell_x}{j}.
	\end{equation} 
	Suppose now, for a contradiction, that $\ell_x \geq \ell+1$. Then clearly

	\begin{equation*}
		\sum_{j=0}^{\ell_x} \binom{\b +\ell_x}{j} \geq \binom{\b + \ell_x}{\ell} + \binom{\beta + \ell_x}{\ell-1}.
	\end{equation*}
Applying \propref{binomial-bound2} to this inequality and combining with the lower bound (\ref{ycount}) we find that
	\begin{equation} \label{Alowbound}
		|\A| \geq  \left( \frac{\beta + \ell_x}{ 2\log |\A| / \log(n/\log|\A|)}\right)^{2\log |\A| / \log(n/\log |\A|)}.
	\end{equation}
Now, since $\ell_x \geq \ell+ 1$ it is clear that
%	\begin{equation*}
%		\frac{\beta + \ell_x}{ 2\log |\A| / \log(n/\log|\A|)} \geq \frac{\beta + 1+ \left \lceil \frac{2 \log |\A|}{\log n - \log \log |\A|} \right \rceil}{ 2\log |\A|} \log \left(\frac{n}{\log|\A|}\right),
%	\end{equation*}
	\begin{equation*}
	\frac{\beta + \ell_x}{ 2\log |\A| / \log(n/\log|\A|)} \geq \frac{\beta + 1+ \ell}{ 2\log |\A|}  \cdot \log \left(\frac{n}{\log|\A|}\right),
	\end{equation*}
	and so by substituting the definition of $\b$ into this inequality, we see that
%	\begin{equation*}
%		\frac{\beta + \ell_x}{ 2\log |\A| / \log(n/\log|\A|)} \geq  \frac{ \left( \left( \frac{n}{\log |\A|} \right)^{1/2} + 1 \right) \left \lceil \frac{2 \log |\A|}{\log (n / \log |\A|)} \right \rceil}{ 2\log |\A|} \log \left(\frac{n}{\log|\A|}\right) > \left(\frac{n}{\log |\A|} \right)^{1/2}.
%	\end{equation*}
	\begin{equation*}
	\frac{\beta + \ell_x}{ 2\log |\A| / \log(n/\log|\A|)} \geq  \frac{ \left( \left( \frac{n}{\log |\A|} \right)^{1/2} + 1 \right)  \cdot\ell}{ 2\log |\A|}  \cdot \log \left(\frac{n}{\log|\A|}\right) > \left(\frac{n}{\log |\A|} \right)^{1/2}.
	\end{equation*}
	From this, and equation (\ref{Alowbound}) we see that
	\begin{equation*}
		|\A| > \left(\frac{n}{\log|\A|}\right)^{\log|\A|/\log(n/\log|\A|)} = |\A|,
	\end{equation*}
	which is a contradiction. We therefore deduce that $\ell_x \leq \ell$.
\end{proof}

In what follows, let $\A \subseteq \bits$ be a left-compressed down-set with $1 \leq \log |\A| < \dim$. Let $\ell,\b$ be defined as in \propref{sB-bounds}. Recall that $\ell_x = |x \cap \{\b+1, \ldots, \dim\}|$ equals the number of large elements in $x\in \A$. In our proofs, it will be helpful to order  $\bits$ based on $\ell_x$. In particular, we upper bound $\mathsf{e}_{(b,a)}(\A)$ by partitioning the pairs $\{x,y\} \in \mathsf{E}_{(b,a)}(\A)$ into two sets, based on the cases $\ell_y \leq \ell_x$ and $\ell_y > \ell_x$. By the definition of $\mathsf{E}_{(b,a)}(\A)$, with $b \geq a$, we always have $|x| \geq |y|$. Ordering based on $\ell_x$ and $\ell_y$ enables us to use different arguments in the two cases: when $\ell_y \leq \ell_x$, we count pairs based on $x$, and when $\ell_y > \ell_x$, we count pairs based on $y$. 

\subsection{The case $\ell_y \leq \ell_x$}

\begin{lemma} \lemlab{eba-less}
	Let $b,a$ be nonnegative integers with $b \geq a$ and $1 \leq b+a \leq 2\log |\A|$. %There exists a universal constant $4\sqrt{2}e \geq 1$ such that
	\begin{itemize}
		\item If $b+a$ is even, then \[
		|\{\{x,y\} \in \mathsf{E}_{(b,a)}(\A):\ \ell_y \leq \ell_x\}|
		\leq 
		\left(\frac{4\sqrt{2}e}{b+a}\right)^{(b+a)} \cdot (\dim\cdot\ell)^{(b+a)/2}
		\cdot |\A|.
		\]
		\item If $b+a$ is odd, then 
		\[
		|\{\{x,y\} \in \mathsf{E}_{(b,a)}(\A):\ \ell_y \leq \ell_x\}|
		\leq 
		\left(\frac{4\sqrt{2}e}{b+a}\right)^{b+a} \cdot (\dim\cdot \ell)^{(b+a-1)/2} \cdot \log |\A| \cdot |\A|.
		\]
	\end{itemize}
\end{lemma}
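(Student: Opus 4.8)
The plan is to bound the number of pairs $\{x,y\} \in \mathsf{E}_{(b,a)}(\A)$ with $\ell_y \le \ell_x$ by counting, for each fixed $x \in \A$, the number of admissible partners $y$, and then using the left-compressed down-set structure to argue that many such partners are forced into $\A$, which caps the count via $|\A|$. Write $x = x' \cup x''$ with $x' \subseteq [\b]$ and $x'' \subseteq \{\b+1,\dots,\dim\}$, so $|x''| = \ell_x \le \ell$ by \lemref{few-large}. A pair $\{x,y\} \in \mathsf{E}_{(b,a)}(\A)$ with $\ell_y \le \ell_x$ is obtained from $x$ by deleting $b$ elements and adding $a$ new elements, with the constraint that the net effect on the `big' coordinates is non-positive; since $|y| = |x| - (b-a) \le |x|$ and $\A$ is a down-set and left-compressed, the `worst case' (most partners) is when the added elements are as small as possible. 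So first I would count: at most $\binom{|x|}{b} \le \binom{\lfloor \log|\A|\rfloor}{b}$ ways to choose which $b$ elements of $x$ to delete, and then at most $\binom{\b + \ell}{a}$ (roughly) ways to choose the $a$ new small elements to add — here using that the new elements can be taken in $[\b]$ together with at most $\ell$ `big' slots, and that there are at most $\b + \ell \le 2\b$-ish positions available. This gives a first crude bound $\binom{\lfloor\log|\A|\rfloor}{b}\binom{\b+\ell}{a} \cdot |\A|$ on the count.

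Next I would convert this into the stated form. Using $\binom{N}{k} \le (eN/k)^k$, the bound becomes roughly $\left(\frac{e\lfloor\log|\A|\rfloor}{b}\right)^b \left(\frac{e(\b+\ell)}{a}\right)^a |\A|$. The key arithmetic facts are in \propref{sB-bounds}: $\b^2 \le \dim\ell$, $\lfloor\log|\A|\rfloor\log|\A| \le \dim\ell$ (part (v)), and $|x|^2 \le \dim\ell$. I want to show the $b$-factor contributes essentially $(\dim\ell)^{b/2}$ and the $a$-factor essentially $(\dim\ell)^{a/2}$, so the product is $(\dim\ell)^{(b+a)/2}$. For the $b$-factor: $\lfloor\log|\A|\rfloor^b \le (\lfloor\log|\A|\rfloor\log|\A|)^{b/2} \cdot (\text{something})$ — more carefully, $\lfloor\log|\A|\rfloor \le \sqrt{\lfloor\log|\A|\rfloor\log|\A|} \le \sqrt{\dim\ell}$, so $\lfloor\log|\A|\rfloor^b \le (\dim\ell)^{b/2}$ directly. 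For the $a$-factor: $\b + \ell \le 2\b$ (since $\ell \le \b$ when $\ell \ge 1$ and $\dim/\log|\A| \ge 1$; if $\b < \ell$ one handles it separately, but $\b \ge \ell$ follows from $\dim \ge \log|\A|$), so $(\b+\ell)^a \le (2\b)^a = 2^a \b^a \le 2^a (\dim\ell)^{a/2}$. Collecting the $\sqrt 2$'s and $e$'s and the $2^a$, and bounding $b^{-b} a^{-a} \le \left(\frac{2}{b+a}\right)^{b+a}$-type expressions (by AM-GM, $b^b a^a \ge ((b+a)/2)^{b+a}$), yields the constant $\left(\frac{4\sqrt 2 e}{b+a}\right)^{b+a}$.

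The odd versus even split enters precisely when $a = 0$ is impossible to avoid in parity: if $b+a$ is odd then we cannot have $b = a$, and in the extreme the $a$-side can be as small as $a=0$ relative to a large $b$, losing a power of $\sqrt{\dim\ell}$; more to the point, when $b+a$ is odd we can only pair up $(b+a-1)$ of the coordinates into $\dim\ell$-factors, and the leftover single factor of $\lfloor\log|\A|\rfloor$ (from one unmatched deletion) must be bounded by $\log|\A|$ rather than $\sqrt{\dim\ell}$ — this is exactly why the odd bound carries an extra $\log|\A|$ and only $(\dim\ell)^{(b+a-1)/2}$. Concretely, in the odd case I would peel off one factor of $\lfloor\log|\A|\rfloor \le \log|\A|$ from the $b$-factor and bound the remaining $b-1$ (an even-parity-compatible amount together with the $a$ factors, since $b-1+a$ is even) by $(\dim\ell)^{(b+a-1)/2}$ as before.

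The main obstacle I anticipate is making the partner-counting step rigorous: one must verify that the partners $y$ produced by `delete $b$, add $a$ small ones' genuinely lie in $\A$ under the left-compressed down-set hypothesis, and that this family (ranging over choices of $x$) has size exactly the claimed product up to the overcounting factor $|\A|$ — i.e.\ that counting partners-of-$x$ and multiplying by $|\A|$ is a legitimate upper bound. The subtlety is the interaction between the constraint $\ell_y \le \ell_x$ (which is what lets us count based on $x$) and the requirement that the added elements go into the `small' block; one has to check that pushing added elements leftward via left-compression does not increase $\ell_y$ above $\ell_x$, and handle the boundary case where $x''$ is already nonempty so that adding a big element is `cheaper'. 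Once the combinatorial count is pinned down, the rest is the routine binomial-to-exponential estimation using \propref{sB-bounds}.
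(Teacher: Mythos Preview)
Your proposal has a genuine gap in the partner-counting step. The bound $\binom{\b+\ell}{a}$ for the number of ways to choose the $a$ added elements is incorrect. The constraint $\ell_y \le \ell_x$ does \emph{not} force the added elements to lie in a set of size $\b+\ell$: it only says that the number of big elements you add is at most the number of big elements you delete. But each big element you add may be chosen from $\{\b+1,\ldots,\dim\}\setminus x$, a set of size roughly $\dim-\b$, not $\ell$. Concretely, if $\ell_x=1$ (say $x$ contains one big coordinate $j$), you may delete $j$ and add any other big coordinate $j'\in\{\b+1,\ldots,\dim\}\setminus\{j\}$; there are $\dim-\b-1$ choices for $j'$, and your bound sees none of this factor of $\dim$. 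So the crude count $\binom{|x|}{b}\binom{\b+\ell}{a}$ is simply not an upper bound on the number of admissible $y$.

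The paper's proof handles this by introducing a parameter $p=|(y\setminus x)\cap\{\b+1,\ldots,\dim\}|$, the number of big elements added. For fixed $p$ the count of $y$ is at most
\[
\binom{\dim-\b-\ell_x}{p}\binom{\ell_x}{p}\binom{\b-|x|+\ell_x}{a-p}\binom{|x|}{b-p},
\]
where the crucial point is that the large factor $\binom{\dim}{p}$ coming from the big additions is paired with $\binom{\ell_x}{p}\le\binom{\ell}{p}$ (since at least $p$ big elements must be deleted and there are only $\ell_x\le\ell$ of them available), yielding $(\dim\ell)^p/(p!)^2$. After this pairing, the remaining factors $\b^{a-p}|x|^{b-p}$ are controlled by \propref{sB-bounds} exactly as you outline, and summing over $p\in\{0,1,\ldots,a\}$ costs only a factor $(a+1)\le\sqrt{2}^{\,b+a}$. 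Your treatment of the constants via $b^b a^a\ge((b+a)/2)^{b+a}$ and your handling of the odd case (peel off one factor of $|x|\le\log|\A|$) are both fine and match the paper; the missing ingredient is precisely this $p$-decomposition. Finally, note that you do not need (and should not try) to argue that the counted $y$'s lie in $\A$: for an upper bound on $|\{\{x,y\}\in\mathsf{E}_{(b,a)}(\A):\ell_y\le\ell_x\}|$ it suffices to count all $y\in\{0,1\}^{\dim}$ with the required symmetric-difference structure and then sum over $x\in\A$.
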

\begin{proof}
	Fix $x \in \A$. For each $p \in [a] \cup \{0\}$, we will bound the number of $y \in \bits$ such that $\{x,y\} \in \mathsf{E}_{(b,a)}(\A)$ and $\ell_y \leq \ell_x$ and 
	$
	|(y \setminus x) \cap \{\b+1,\ldots,\dim\}|=p. 
	$ 
	%Letting
	%\[
	%	\ell_x = |x \cap \{\b+1,\ldots,\dim\}|,
	%\] 
	We claim that the number of such $y$ is at most 
	\begin{equation}\label{eqn:y-choices}
	\binom{\dim-\b-\ell_x}{p} \binom{\ell_x}{p}
	\binom{\b - |x| + \ell_x}{a-p} \binom{|x|}{b-p}.
	\end{equation}
	Indeed, the first two factors count the ways to replace $p$ elements in $x$ with $p$ new elements that are larger than $\b$, and the final two factors count the ways to replace $b-p$ elements in~$x$ with $a-p$ new elements that are at most $\b$.  
	
	Recall that \lemref{few-large} implies that $\ell_x \leq \ell$. Therefore, the quantity in (\ref{eqn:y-choices}) is at most 
	\begin{equation}\label{eqn:bap-less}
	\binom{\dim}{p} \binom{\ell}{p}
	\binom{\b}{a-p} \binom{|x|}{ b-p}
	\ \leq\ 	\frac{(\dim \ell)^{p} \cdot \b^{a-p}|x|^{b-p}}{(p!)^2\cdot (a-p)! \cdot (b-p)!} .
	\end{equation}
	
	We note that for $i,j \geq 0$ we have $i^i j^j \geq \left( \frac{i+j}{2}\right)^{i+j}$. Indeed, taking logs and dividing by~2, this is equivalent to
$$\tfrac{1}{2}(i \log i + j \log j) \geq \tfrac{i+j}{2}\log\left( \tfrac{i+j}{2}\right),$$
which follows from the convexity of the function $z \mapsto z \log z$. Hence, we may bound from below the denominator of the right-hand side of equation (\ref{eqn:bap-less}) as follows:
	\begin{align} 
		(p!)^2 \cdot (a-p)! \cdot (b-p)! &\geq \frac{p^{2p} \cdot (a-p)^{a-p} \cdot (b-p)^{b-p}}{e^{b+a}} \quad \text{(by Stirling's approximation)}\\\label{eqn:denominatorlowerbound}
		& \geq \left(\frac{b+a}{4e}\right)^{b+a} \quad \text{(by two applications of } i^i j^j \geq \left( \frac{i+j}{2}\right)^{i+j}\text{)}.
	\end{align}

	We now break the bounding of (\ref{eqn:bap-less}) into two cases, based on the parity of $b+a$. For both cases, recall that \propref{sB-bounds} implies that $\b|x| \leq \dim\ell$ and $\b^2 \leq \dim\ell$ and $|x|^2 \leq \dim\ell$.
	
	\paragraph{The case where $b+a$ is even.} We bound the numerator of the RHS of  (\ref{eqn:bap-less}) by 
	\[
	(\dim \ell)^{p} \cdot \b^{a-p}|x|^{b-p}
	\leq 
	(\dim \ell)^{p} \cdot (\dim\ell)^{(a-p)/2}\cdot(\dim\ell)^{(b-p)/2}
	= 
	(\dim \ell)^{(b+a)/2}.
	\]
	Summing the above bound on  (\ref{eqn:bap-less}) over $p \in [a] \cup \{0\}$ and employing (\ref{eqn:denominatorlowerbound}), we obtain 
	\begin{eqnarray*} 
		|\{y \in \A : \{x,y\} \in \mathsf{E}_{(b,a)}(\A),\ \ell_y \leq \ell_x\}|
		&\leq&
		\sum_{p=0}^a  \frac{\left(\dim\ell\right)^{(b+a)/2}}{(p!)^2 \cdot (b-p)! \cdot (a-p)!}\\
		&\leq& 
		(a+1) \cdot \frac{\left(\dim\ell\right)^{(b+a)/2}(4e)^{(b+a)}}{(b+a)^{b+a}} \\
		&\leq& 
		\frac{\left(\dim\ell\right)^{(b+a)/2}(4\sqrt{2}e)^{(b+a)}}{(b+a)^{b+a}},
	\end{eqnarray*}
	where the last inequality uses the fact that $(a+1) \leq (\sqrt{2})^{b+a}$, leading to the factor $(4\sqrt{2}e)^{(b+a)}$.
	\paragraph{The case where $b+a$ is odd.} In this case, we have $b \geq a+1 \geq p+1$. We recall that $|x| \leq \log |\A|$,  and we upper bound the numerator of the RHS of  (\ref{eqn:bap-less}) by 
	\[
	(\dim \ell)^{p} \cdot \b^{a-p}|x|^{b-p}
	\leq 
	(\dim \ell)^{p} \cdot (\dim\ell)^{(a-p)/2}\cdot(\dim\ell)^{(b-p-1)/2} \cdot \log |\A|
	= 
	(\dim \ell)^{(b+a-1)/2}\cdot \log |\A|.
	\]
	Summing the above bound on (\ref{eqn:bap-less}) over $p \in [a] \cup \{0\}$ and employing (\ref{eqn:denominatorlowerbound}),  we obtain
	\begin{eqnarray*} 
		|\{y \in \A : \{x,y\} \in \mathsf{E}_{(b,a)}(\A),\ \ell_y \leq \ell_x\}|
		&\leq&
		\sum_{p=0}^a  \frac{\left(\dim\ell\right)^{(b+a-1)/2}\cdot \log |\A|}{(p!)^2 \cdot (b-p)! \cdot (a-p)!}\\
		&\leq& 
		\frac{\left(\dim\ell\right)^{(b+a-1)/2}(4\sqrt{2}e)^{(b+a)}\cdot \log |\A|}{(b+a)^{b+a}}.
	\end{eqnarray*} 
	In both even and odd cases, summing over $x \in \A$
	completes the proof. 
\end{proof}

\subsection{The case $\ell_y > \ell_x$}

\begin{lemma} \lemlab{eba-more}
	Let $b,a$ be nonnegative integers with $b \geq a$ and $1 \leq b+a \leq 2\log |\A|$. %There exists a universal constant $4\sqrt{2}e \geq 1$ such that
	\begin{itemize}
		\item If $b+a$ is even, then \[
		|\{\{x,y\} \in \mathsf{E}_{(b,a)}(\A):\ \ell_y > \ell_x\}|
		\leq 
		\left(\frac{4\sqrt{2}e}{b+a}\right)^{(b+a)} \cdot (\dim\cdot\ell)^{(b+a-2)/2}\cdot \ell \b
		\cdot |\A|.
		\]
		\item If $b+a$ is odd, then 
		\[
		|\{\{x,y\} \in \mathsf{E}_{(b,a)}(\A):\ \ell_y > \ell_x\}|
		\leq 
		\left(\frac{4\sqrt{2}e}{b+a}\right)^{b+a} \cdot (\dim\cdot \ell)^{(b+a-1)/2} \cdot \ell \cdot |\A|.
		\]
	\end{itemize}
\end{lemma}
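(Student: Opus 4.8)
The plan is to mirror the structure of the proof of \lemref{eba-less}, but with the roles of $x$ and $y$ reversed so that we count pairs based on the set with \emph{more} large elements, namely $y$. Fix $y \in \A$. Since $\{x,y\} \in \mathsf{E}_{(b,a)}(\A)$ with $b \geq a$ we have $x = (y \cup S) \setminus T$ where $T = y \setminus x$ has size $a$ and $S = x \setminus y$ has size $b$; moreover the assumption $\ell_y > \ell_x$ forces at least one element of $T$ to be large (i.e.\ lie in $\{\b+1,\ldots,\dim\}$) more than the number of large elements of $S$. The first step is to parametrise, for each $p \in [b] \cup \{0\}$, the choices of $x$ with $|(x \setminus y) \cap \{\b+1,\ldots,\dim\}| = p$: there are $\binom{\dim-\b-\ell_y}{p}\binom{\ell_y}{a'}$-type factors for adding $p$ large elements to $x$ while removing $a'$ of $y$'s large ones, times $\binom{\b}{b-p}\binom{|y|}{a - a'}$-type factors for the small part — exactly the transpose of \eqref{eqn:y-choices}. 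Using \lemref{few-large} ($\ell_y \leq \ell$) and $|y| \leq \log|\A|$, together with \propref{sB-bounds} to convert every surviving $\b$, $|y|$ or $\dim\ell$ factor into powers of $\dim\ell$, one is left with a sum over $p$ of terms of the form $(\dim\ell)^{\alpha}\cdot(\text{leftover factor})/((p!)^2(a-p)!(b-p)!)$, and the leftover factor is where the statement of this lemma differs from the previous one.

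The key bookkeeping point is that, because $\ell_y > \ell_x$, at least one of the $a$ removed elements of $y$ is large, so the power of $\b$ available for the small removals drops by one compared with \lemref{eba-less}: in the even case this replaces one factor of $(\dim\ell)^{1/2}$ by a factor of $\b$, giving the stated $(\dim\ell)^{(b+a-2)/2}\cdot \ell\b$ (here the extra $\ell$ comes from the $\binom{\ell_y}{\cdot} \leq \ell$ bound used when counting which large element of $y$ got removed); in the odd case one uses $|y| \leq \log|\A|$ to absorb a half-power and ends with $(\dim\ell)^{(b+a-1)/2}\cdot\ell$, the $\log|\A|$ no longer appearing because the `extra' large removal already accounts for the parity defect. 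After bounding the denominator exactly as in \eqref{eqn:denominatorlowerbound} via Stirling and two applications of $i^i j^j \geq ((i+j)/2)^{i+j}$, summing over $p \in \{0,\ldots,b\}$ contributes a factor $(b+1) \leq (\sqrt 2)^{b+a}$, which is absorbed into the $(4\sqrt2 e)^{b+a}$, and finally summing over $y \in \A$ produces the factor $|\A|$.

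I would present the two parities as two short paragraphs exactly paralleling \lemref{eba-less}, so that only the `leftover factor' computation needs to be written out in each case. The main obstacle is making precise the claim ``$\ell_y > \ell_x$ implies at least one large element is removed from $y$'': one must check that in the worst case the net change $\ell_x = \ell_y + p - a'$ (where $a'$ is the number of large elements of $y$ removed and $p$ the number of large elements added to $x$) being strictly less than $\ell_y$ genuinely forces $a' \geq 1$ for \emph{every} $(b,a)$-pair counted under $y$, and hence that the $\binom{\ell_y}{a'}$ factor can be replaced by $\ell$ (rather than $1$) while simultaneously the small-removal binomial loses a $\b$; one must be a little careful that this is a per-$p$ statement and that the cases $a = 0$ (where $\ell_y > \ell_x$ is impossible) and $b = a$ are handled consistently. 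Once that combinatorial accounting is pinned down, the rest is the same convexity-plus-Stirling calculation as before.
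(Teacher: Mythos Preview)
Your overall plan --- fix $y$, count $x$, parametrise by the number $p$ of large elements in $x\setminus y$, invoke $\ell_y\le\ell$ and \propref{sB-bounds}, bound the factorial denominator via Stirling plus convexity, then sum over $y$ --- is exactly the paper's route. The gap is in how you exploit the constraint $\ell_y>\ell_x$.

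You write the correct relation $\ell_x=\ell_y+p-a'$ but then only extract $a'\ge 1$ and propose to ``replace $\binom{\ell_y}{a'}$ by $\ell$''. A single factor of $\ell$ is not enough: for each $p$ you actually have $a'\ge p+1$, and you must \emph{designate} $p+1$ large removals from $y$ (contributing $\binom{\ell_y}{p+1}\le \ell^{p+1}/(p+1)!$), absorbing the remaining $a-p-1$ removals into $\binom{|y|}{a-p-1}$. This gives the per-$p$ bound
\[
\binom{n}{p}\binom{\ell}{p+1}\binom{\b}{b-p}\binom{|y|}{a-p-1}
\;\le\;
\frac{(n\ell)^{p}\cdot\ell\cdot\b^{\,b-p}|y|^{a-p-1}}{p!\,(p+1)!\,(b-p)!\,(a-p-1)!},
\]
which (after the shift $q=p+1$) is precisely the paper's expression; the denominator is \emph{not} $(p!)^2(a-p)!(b-p)!$ as you wrote. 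The $\ell^{p+1}$ is what pairs with $n^{p}$ to produce $(n\ell)^{p}\cdot\ell$; with only one $\ell$ the terms $p\ge 1$ would carry an unmatched $n^{p}$, which does not reduce to a power of $n\ell$ and blows up the bound. Two smaller slips: the effective range of $p$ is $\{0,\ldots,a-1\}$ (since $a'\le a$), so the number of terms is $a$ and one uses $a\le(\sqrt 2)^{b+a}$ --- your $(b+1)\le(\sqrt 2)^{b+a}$ actually fails at $(b,a)=(2,1)$; and $\b$ governs small \emph{additions} (elements of $x\setminus y$ in $[\b]$), not removals --- the real change from \lemref{eba-less} is that the free-removal factor drops from $\binom{|y|}{a-p}$ to $\binom{|y|}{a-p-1}$ while the small-addition factor is $\binom{\b}{b-p}$.
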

\begin{proof}
	Fix $y \in \A$. For each $p \in [a]$, we will bound the number of $x \in \bits$ such that $\{x,y\} \in \mathsf{E}_{(b,a)}(\A)$ and $\ell_y > \ell_x$ and 
	$
	|(x \setminus y) \cap \{\b+1,\ldots,\dim\}|=p-1. 
	$ 
	We claim that the number of such $x$ is at most 
	\begin{equation}\label{eqn:x-choices}
	\binom{\dim-\b-\ell_y}{p-1} \binom{\ell_y}{p}
	\binom{\b - |x| + \ell_y}{b-p+1} \binom{|y|}{a-p}.
	\end{equation}
	Indeed, the first two factors count the ways to replace $p$ elements in $y$ with $p-1$ new elements that are larger than $\b$, and the final two factors count the ways to replace $a-p$ elements in~$y$ with $b-p+1$ new elements that are at most $\b$.  
	
	Recall that Lemma \ref{lemma:few-large} implies that $\ell_y \leq \ell$. Thus, the quantity in (\ref{eqn:x-choices}) is at most 
	\begin{equation}\label{eqn:bap-more}
	\binom{\dim}{p-1} \binom{\ell}{p}
	\binom{\b}{b-p+1} \binom{|y|}{a-p}
	\ \leq\ 	
	\frac{(\dim \ell)^{p-1} \cdot \ell \cdot \b^{b-p+1} \cdot |y|^{a-p}}
	{(p-1)!\cdot p!\cdot (b-p+1)! \cdot (a-p)!} .
	\end{equation}
Similarly to in the proof of \lemref{eba-less} (i.e., by applying Stirling's approximation and the fact $i^i j^j \geq (\frac{i+j}{2})^{i+j}$), we lower bound the denominator of the right hand side of (\ref{eqn:bap-more}) as follows.

	\begin{align}
		(p-1)! \cdot p! \cdot (b-p+1)! \cdot (a-p)! &\geq \frac{(p-1)^{p-1} \cdot p^p \cdot (a-p)^{a-p} \cdot (b-p+1)^{b-p+1}}{e^{b+a}}\\
		\label{eqn:denominatorbound2}
		& \geq \left( \frac{b+a}{4e} \right)^{b+a}.
	\end{align}
	Recall that \propref{sB-bounds} implies that $\b^2 \leq \dim\ell$ and $|y|^2 \leq \dim\ell$. We now break into two cases, based on the parity of $b+a$. %$\b|y| \leq \dim\ell$ and 
	
	\paragraph{The case where $b+a$ is even.} Notice that $\ell_y > \ell_x$ and $|x| \geq |y|$ implies $a \geq 1$ and $b+a \geq 2$. We upper bound the numerator of the RHS of  (\ref{eqn:bap-more}) by 
	\[
	(\dim \ell)^{p-1} \cdot \ell \cdot \b^{b-p+1} \cdot |y|^{a-p}
	\leq 
	(\dim \ell)^{p-1}\cdot \ell \cdot \b \cdot
	(\dim\ell)^{(b-p)/2}\cdot(\dim\ell)^{(a-p)/2}
	= 
	(\dim \ell)^{(b+a-2)/2} \cdot \ell \b.
	\]
Summing our bound on (\ref{eqn:bap-more}) over $p \in [a]$, employing (\ref{eqn:denominatorbound2}), and using that $a \leq (\sqrt{2})^{b+a}$,
	\begin{eqnarray*} 
		|\{x \in \A : \{x,y\} \in \mathsf{E}_{(b,a)}(\A),\ \ell_y > \ell_x\}|
		&\leq&
		\sum_{p=1}^a  \frac{\left(\dim\ell\right)^{(b+a-2)/2} \cdot \ell \b}{p!\cdot (p-1)! \cdot (b-p+1)! \cdot (a-p)!}\\
		&\leq&
		\frac{\left(\dim\ell\right)^{(b+a-2)/2}(4\sqrt{2}e)^{(b+a)}\cdot \b\ell}
		{(b+a)^{b+a}}.
	\end{eqnarray*}
	
\paragraph{The case where $b+a$ is odd.} Notice that $\ell_y > \ell_x$ and $|x| \geq |y|$ implies $a \geq 1$, and in this case, $b \geq a + 1 \geq p+1$. We upper bound the RHS of  (\ref{eqn:bap-more}) by 
	\[
	(\dim \ell)^{p-1} \cdot \ell \cdot \b^{b-p+1} \cdot |y|^{a-p}
	\leq 
	(\dim \ell)^{p-1}\cdot \ell \cdot
	(\dim\ell)^{(b-p+1)/2}\cdot(\dim\ell)^{(a-p)/2}
	= 
	(\dim \ell)^{(b+a-1)/2} \cdot \ell.
	\]
	Summing our bound on (\ref{eqn:bap-more}) over $p \in [a]$, employing (\ref{eqn:denominatorbound2}), and using that $a \leq (\sqrt{2})^{b+a}$,
	\begin{eqnarray*} 
		|\{x \in \A : \{x,y\} \in \mathsf{E}_{(b,a)}(\A),\ \ell_y > \ell_x\}|
		&\leq&
		\sum_{p=1}^a  \frac{\left(\dim\ell\right)^{(b+a-1)/2} \cdot \ell }{p!\cdot (p-1)! \cdot (b-p+1)! \cdot (a-p)!}\\
		&\leq&
		\frac{\left(\dim\ell\right)^{(b+a-1)/2}(4\sqrt{2}e)^{(b+a)}\cdot \ell}
		{(b+a)^{b+a}}.
	\end{eqnarray*}
	In both even and odd cases, summing over $y \in \A$ completes the proof. 
\end{proof}

\subsection{Finishing the proof}

\begin{proof}[Proof of \thmref{even-power-approx}] 
	%Applying \propref{compressed}, we assume that $\A$ is a left-compressed down-set. 
	Recall that $
	\calU := \{(b,a) \in \mathbb{Z}_{\geq 0}^2:\ b \geq a\ \text{and}\ b+a \leq 2t\}.$ Invoking (\ref{eqn:sum-ab-bound}) and using \lemref{eba-less} and \lemref{eba-more}, we will upper bound each term in 
	$$
	\mathsf{e}_{\leq 2t}(\A) 
	= \sum_{(b,a) \in \calU} \mathsf{e}_{(b,a)}(\A).
	$$
	For all $(b,a) \in \calU$, we claim that%, using that $2 \leq (\sqrt{2})^{2t}$,
	\begin{equation}\label{ab-2t}	
	\frac{\mathsf{e}_{(b,a)}(\A)}{|\A|}  
	%\leq 2\left(\frac{4\sqrt{2}e}{2t}\right)^{2t} (\dim \ell)^t
	\leq \left(\frac{4e}{t}\right)^{2t} (\dim \ell)^t.
	\end{equation}
	Assuming that (\ref{ab-2t}) holds, and using that $|\calU| \leq 2^{2t}$, we have
	\[
	\sum_{(b,a) \in \calU} \frac{\mathsf{e}_{(b,a)}(\A)}{|\A|} 
	\leq |\calU| \cdot \left(\frac{4e}{t}\right)^{2t} (\dim \ell)^t
	\leq \left(\frac{8e}{t}\right)^{2t} (\dim \ell)^t,
	%= \left(\frac{16e}{r}\right)^{r} (\dim \ell)^{r/2},
	\]
	which implies the bound in the theorem statement.
To prove (\ref{ab-2t}), we will use \propref{sB-bounds} and the fact that $t \leq \left \lfloor \log |\A| \right \rfloor$. When $b+a$ is even, then combining \lemref{eba-less} and \lemref{eba-more} (using $\b\ell \leq \dim\ell$), we have 

	\begin{align*}
e_{(b,a)}(\mathcal{A}) &\leq \left ( \frac{4\sqrt{2} e}{b+a}\right )^{(b+a)}\cdot (n\ell)^{(b+a)/2}\cdot |\mathcal{A}| +\left ( \frac{4\sqrt{2} e}{b+a}\right )^{(b+a)}\cdot (n\ell)^{(b+a-2)/2}\cdot \ell\beta \cdot |\mathcal{A}|\\
& = \left ( \frac{4\sqrt{2} e}{b+a}\right )^{(b+a)} \cdot |\mathcal{A}| \cdot (n\ell)^{(b+a - 2)/2} \cdot \left( n\ell + \ell\beta \right)\\
& \leq 2 \cdot \left ( \frac{4\sqrt{2} e}{b+a}\right )^{(b+a)} \cdot |\mathcal{A}| \cdot (n\ell)^{(b+a)/2} \quad \text{(as } \ell\beta \leq n\ell \text{)}\\
&\leq \left ( \frac{8 e}{b+a}\right )^{(b+a)} \cdot |\mathcal{A}| \cdot (n\ell)^{(b+a)/2} \quad \text{(as } 2 \leq \sqrt{2}^{(b+a)} \text{)}.
\end{align*}
	To verify (\ref{ab-2t}), it suffices to show that the RHS of the above inequality increases with $b+a$ (i.e. that it is maximized over $\calU$ at $b+a = 2t$). Indeed, let $k = b+a \geq 2$. Then, it suffices to show that
	\begin{equation}
	\label{ba-increases}
	\left(\frac{8e}{k-1}\right)^{k-1} \cdot (\dim\cdot\ell)^{k/2 - 1/2}
	\leq
	\left(\frac{8e}{k}\right)^{k} \cdot (\dim\cdot\ell)^{k/2}.
	\end{equation}
After rearranging, we have 
	\begin{equation*}
	\frac{k}{8e} \left(\frac{k}{k-1}\right)^{k-1} 
	\ \leq\  \frac{k}{8} 
	\ \leq\
	(\dim \ell)^{1/2},
	\end{equation*}
	where the first inequality uses that $(\frac{k}{k-1})^{k-1} \leq e$, and the second inequality uses that $(k/8)^2 \leq t^2 \leq \left \lfloor \log|\A| \right \rfloor^2 \leq \dim \ell$, which holds by \propref{sB-bounds} (v).
	
	Similarly, when $b+a$ is odd, \lemref{eba-less} and \lemref{eba-more} (using $\ell \leq \log |\A|$) imply that

\begin{align*}
e_{(b,a)}(\mathcal{A}) &\leq \left ( \frac{4\sqrt{2} e}{b+a}\right )^{(b+a)} (n\ell)^{(b+a-1)/2} \log |\mathcal{A}| \cdot |\mathcal{A}|+\left ( \frac{4\sqrt{2} e}{b+a}\right )^{(b+a)}\cdot (n\ell)^{(b+a-1)/2} \ell |\mathcal{A}|\\
& = \left ( \frac{4\sqrt{2} e}{b+a}\right )^{(b+a)} \cdot |\mathcal{A}| \cdot (n\ell)^{(b+a-1)/2} \cdot (\log |\mathcal{A}| + \ell)\\
& \leq 2 \cdot \left ( \frac{4\sqrt{2} e}{b+a}\right )^{(b+a)} \cdot |\mathcal{A}| \cdot (n\ell)^{(b+a-1)/2} \cdot \log |\mathcal{A}| \quad \text{(as } \ell \leq \log |\mathcal{A}| \text{)}\\
& \leq \left ( \frac{8 e}{b+a}\right )^{(b+a)} \cdot |\mathcal{A}| \cdot (n\ell)^{(b+a-1)/2} \cdot \log |\mathcal{A}| \quad \text{(as } 2 \leq \sqrt{2}^{(b+a)} \text{)}.
\end{align*}
We claim that $\left ( \frac{8 e}{b+a}\right )^{(b+a)} \cdot |\mathcal{A}| \cdot (n\ell)^{(b+a-1)/2} \cdot \log |\mathcal{A}|$ is maximised over $\mathcal{U}$ when $b+a = 2t - 1$. Indeed, letting $k = b+a \geq 2$, we have
\begin{align*}
&\left ( \frac{8 e}{k-1}\right )^{k-1} \cdot |\mathcal{A}| \cdot (n\ell)^{(k-2)/2} \cdot \log |\mathcal{A}| \leq \left ( \frac{8 e}{k}\right )^{k} \cdot |\mathcal{A}| \cdot (n\ell)^{(k-1)/2} \cdot \log |\mathcal{A}|\\
\iff & \left(\frac{k}{k-1}\right)^{k-1} \frac{k}{8e} \leq (n\ell)^{1/2},
\end{align*}
where the last inequality holds since $(k/8)^2 \leq t^2 \leq \left \lfloor \log |\mathcal{A}|\right \rfloor^2 \leq n\ell$, by \propref{sB-bounds} (v) and $\left(\frac{k}{k-1}\right)^{k-1} \leq e$. It follows that %\Leftarrow &\frac{k}{8} \leq (n\ell)^{1/2} \quad \text{(since } \left (\frac{k}{k-1} \right)^{k-1} \leq e \text{)}.

\begin{align*}
e_{(b,a)}(\mathcal{A}) &\leq \left ( \frac{8 e}{2t-1}\right )^{(2t-1)} \cdot |\mathcal{A}| \cdot (n\ell)^{t-1} \cdot \log |\mathcal{A}|\\
& = \left ( \frac{4 e}{t}\right )^{2t} \cdot |\mathcal{A}| \cdot (n\ell)^{t} \cdot \log |\mathcal{A}|\cdot \left(\frac{2t}{2t-1}\right)^{(2t-1)} \cdot \frac{t}{4e}\cdot \frac{1}{n\ell}\\
& \leq \left ( \frac{4 e}{t}\right )^{2t} \cdot |\mathcal{A}| \cdot (n\ell)^{t} \cdot \log |\mathcal{A}| \cdot \frac{t}{4} \cdot \frac{1}{n\ell}\\
& \leq \left ( \frac{4 e}{t}\right )^{2t} \cdot |\mathcal{A}| \cdot (n\ell)^{t},
\end{align*}
where the last inequality follows from noting that $\frac{t \log |\mathcal{A}|}{4} \leq \left \lfloor \log |\mathcal{A}| \right \rfloor \log |\mathcal{A}| \leq n\ell$ (which follows from \propref{sB-bounds} (v)).

%	where the second inequality uses that the middle quantity is maximized at $b+a = 2t-1$, which follows analogously to the proof of (\ref{ba-increases}). 
%	To verify (\ref{ab-2t}) in this case, we observe that 
%	\[
%	\left(\frac{8e}{2t-1}\right)^{2t-1} \cdot (\dim\cdot \ell)^{t-1} \cdot \log |\A| 
%	\leq 
%	\left(\frac{8e}{2t}\right)^{2t} (\dim \ell)^t,
%	\] 
%	using $(\frac{2t}{2t-1})^{t-1}\leq e$ and $t\log |\A| \leq \log^2 |\A| \leq \dim \ell$, which holds by \propref{sB-bounds}(iii).
\end{proof}

%%%%%%%%%%%%%%%%%
\section{The general case for odd distances}
%In this section, we prove \thmref{odd-power-approx}.
\begin{proof}[Proof of \thmref{odd-power-approx}.] %Applying \propref{compressed} allows us to assume that $\A$ is a left-compressed down-set. 
	The following proof has very similar structure to the proof of \thmref{even-power-approx}, so we omit detailed calculations.
	
	Using the notation defined above, it is required to prove that if $\A \subset \{0,1\}^{\dim}$ and $t \in \mathbb{N}$ with $t \leq \log |\A|$, then
	$$
	|\mathsf{E}_{\leq 2t+1}(\A)|: = \mathsf{e}_{\leq 2t+1}(\A) \ \leq\  
	\left(\frac{16e}{2t+1}\right)^{2t+1} 
	\cdot \left(\dim \cdot \ell  \right)^t 
	\cdot |\A| \cdot \log|\A|.
	$$
Letting 
	$
	\mathcal{\calU'} = \{(b,a) \in \mathbb{Z}_{\geq 0}^2:\ b \geq a\ \text{and}\ b+a \leq 2t+1\},
	$ 
	observe that %we can decompose $\mathsf{E}_{\leq 2t+1}(\A)$ as a disjoint union 
	%\[ 
	%\mathsf{E}_{\leq 2t+1}(\A) = \bigcup_{(b,a) \in \calU'} \mathsf{E}_{(b,a)}(\A),
	%\] 
	%and in particular, this implies,
	\begin{equation*} \label{eqn:sum-ab-bound-odd}
	\mathsf{e}_{\leq 2t+1}(\A) 
	= \sum_{(b,a) \in \calU'} \mathsf{e}_{(b,a)}(\A).
	\end{equation*}
	We will upper bound each term in the above sum. For $(b,a) \in \calU'$, we claim that
	\begin{equation}\label{ab-2t-odd}	
	\frac{\mathsf{e}_{(b,a)}(\A)}{|\A|} 
	\leq 2\left(\frac{4\sqrt{2}e}{2t+1}\right)^{2t+1} (\dim \ell)^t \cdot \log|\A|
	\leq \left(\frac{8e}{2t+1}\right)^{2t+1} (\dim \ell)^t \cdot \log|\A|.
	\end{equation}
	Assuming that (\ref{ab-2t-odd}) holds, and using that $|\calU'| \leq 2^{2t+1}$, we have
	\begin{eqnarray*}
		\sum_{(b,a) \in \calU} \frac{\mathsf{e}_{(b,a)}(\A)}{|\A|} 
		\leq |\calU'| \cdot \left(\frac{8e}{2t+1}\right)^{2t+1} (\dim \ell)^t \cdot \log|\A|
		&\leq& \left(\frac{16e}{2t+1}\right)^{2t+1} (\dim \ell)^t\cdot \log|\A|,
		%\\ &=& \left(\frac{16e}{r}\right)^{r} (\dim \ell)^{\floor{r/2}}\cdot \log|\A|,
	\end{eqnarray*}
	which establishes the bound in the theorem statement.
	
	We now prove (\ref{ab-2t-odd}). When $b+a$ is even, then $b+a \leq 2t$ and (\ref{ab-2t-odd}) follows from (\ref{ab-2t}). When $b+a$ is odd, then \lemref{eba-less} and \lemref{eba-more} (using $\ell \leq \log |\A|$) imply that
	\[
	\frac{\mathsf{e}_{(b,a)}(\A)}{|\A|} 
	\leq 
	\left(\frac{8e}{b+a}\right)^{b+a} \cdot (\dim\cdot \ell)^{(b+a-1)/2} \cdot \log |\A|
	\leq \left(\frac{8e}{2t+1}\right)^{2t+1} \cdot (\dim\cdot \ell)^{t} \cdot \log |\A|,
	\]
	where we use that the quantity $\left(\frac{8e}{b+a}\right)^{b+a} \cdot (\dim\cdot \ell)^{(b+a-1)/2}$ increases with $b+a$ (and is maximized over~$\calU'$ at $b+a = 2t+1$), analogous to the proof of (\ref{ba-increases}). %This establishes~(\ref{ab-2t-odd}), completing the proof. 
\end{proof}

%%%
\section{Some open questions}
An immediate open problem is to prove exact edge isoperimetric inequalities for the graphs  we consider, i.e., to precisely determine $D(m,n,r)$ for all $(m,n,r)\in \mathbb{N}^3$. Another direction is to prove stability results for $Q_n^r$ with $r\geq 2$, generalizing prior results for sets with small edge boundary in the hypercube~\cite{ellis2018structure, keevash2018stability}.
It would also be interesting to study graphs on $[k]^{\dim}$ with $k \geq 3$ with edges induced by other metrics. For example, is it possible to prove edge isoperimetric inequalities for the families of graphs connecting pairs in $[k]^{\dim}$ with either $\ell_1$-distance at most $r$ or Hamming distance at most~$r$? Bollob\'{a}s and Leader~\cite{bollobas-leader} and Clements and Lindstr\"{o}m~\cite{clementslindstrom} have solved the respective distance one cases. %There is also $\ell_\infty$ distance.
\newpage
\bibliographystyle{plain}
\bibliography{arxiv}  

\section{Appendix}

Here we provide proof of the technical proposition, \propref{binomial-bound2}. For this we need the following tool.

\begin{proposition}
	\proplab{binomial-bound1}
	Let $f:\mathbb{R}_{\geq 0} \rightarrow \mathbb{R}$ be defined as follows %for each $m \in \mathbb{N} \cup \{0\}$ and $x \in [me,(m+1)e)$:
\begin{equation*}
f(x) = \begin{cases}
\left( \frac{x}{m} \right)^m + \left( \frac{x}{m+1} \right)^{m+1} - e^{x/e} & \text{if } x \in [me,(m+1)e), \text{ for some } m \in \mathbb{N}, m \geq 1\\
1+x-e^{x/e} & \text{if } x \in [0,e)\\
\end{cases}
\end{equation*}
Then the following hold.
\begin{itemize}
\item[(1)] For $x \in [0,e)$, $f(x) \geq x/e \geq 0$.
\item[(2)] For $x \in [e,2e)$, $f(x) \geq \frac{e^2}{4}+(2-\frac{e}{4})(x-e) \geq 0$.
\item[(3)] For $m \geq 2$ and $x \in [me,(m+1)e)$, we have
\begin{equation*}
e^{x/e} - \min \left \{ \left( \frac{x}{m}\right)^m, \left( \frac{x}{m+1}\right)^{m+1} \right \} \leq \frac{1}{m} \min \left\{ \left( \frac{x}{m}\right)^m, \left( \frac{x}{m+1}\right)^{m+1} \right \},
\end{equation*}

from which it immediately follows that

\begin{equation*}
f(x) \geq \max \left\{ \left( \frac{x}{m}\right)^m, \left( \frac{x}{m+1}\right)^{m+1} \right \} - \frac{1}{m} \min \left\{ \left( \frac{x}{m}\right)^m, \left( \frac{x}{m+1}\right)^{m+1} \right \} \geq 0.
\end{equation*}
\end{itemize}
\end{proposition}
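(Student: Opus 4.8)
The plan is to prove the three items separately, each by reduction to an elementary convexity fact. For \textbf{(1)}, on $[0,e)$ the assertion $f(x)\ge x/e$ rearranges, after substituting $u=x/e\in[0,1)$, to $1+(e-1)u\ge e^{u}$. The left-hand side is precisely the secant line of the convex function $u\mapsto e^{u}$ through the points $(0,1)$ and $(1,e)$, so it dominates $e^{u}$ on $[0,1]$; and $x/e\ge 0$ is trivial.

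For \textbf{(2)} we are in the case $m=1$, so $f(x)=x+x^{2}/4-e^{x/e}=:\phi(x)$. The first observation is that the claimed lower bound is exactly the secant line $L$ of $\phi$ on $[e,2e]$: indeed $\phi(e)=e^{2}/4$, $\phi(2e)=2e$, and the slope is $(2e-e^{2}/4)/e=2-e/4$. Thus it suffices to show $\phi\ge L$ and $L\ge 0$ on $[e,2e)$; the latter holds since $L(e)=e^{2}/4>0$ and the slope $2-e/4>0$. For $\phi\ge L$, set $\psi:=\phi-L$, so $\psi(e)=\psi(2e)=0$ and $\psi''=\phi''=\tfrac12-e^{-2}e^{x/e}$, which is positive for $x<e(2-\ln 2)$ and negative for $x>e(2-\ln 2)$; since $e(2-\ln 2)\in(e,2e)$, the function $\psi$ is convex on $[e,e(2-\ln 2)]$ and concave on $[e(2-\ln 2),2e]$. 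Because $\psi'(e)=\phi'(e)-(2-e/4)=e/2-2+e/4=\tfrac34 e-2>0$, on the convex piece $\psi'$ stays positive, so $\psi$ increases there and $\psi\ge\psi(e)=0$; in particular $\psi\big(e(2-\ln 2)\big)\ge 0$. On the concave piece $\psi$ has nonnegative values at both endpoints, hence lies above the (nonnegative) chord joining them, so $\psi\ge 0$ there as well, and therefore $\psi\ge 0$ on all of $[e,2e)$.

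For \textbf{(3)}, write $A=(x/m)^m$ and $B=(x/(m+1))^{m+1}$. The displayed inequality is equivalent to $e^{x/e}\le(1+\tfrac1m)\min\{A,B\}$, and granting this the stated consequence is immediate, since $f(x)=\max\{A,B\}+\min\{A,B\}-e^{x/e}\ge\max\{A,B\}-\tfrac1m\min\{A,B\}\ge 0$ (the last step because $\max\{A,B\}\ge\min\{A,B\}\ge\tfrac1m\min\{A,B\}$ for $m\ge1$). To prove $e^{x/e}\le(1+\tfrac1m)\min\{A,B\}$, set $\theta=x/e\in[m,m+1)$, so $e^{x/e}=e^{\theta}$, $A=(e\theta/m)^m$, $B=(e\theta/(m+1))^{m+1}$. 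Taking logarithms, the two inequalities $e^{\theta}\le(1+\tfrac1m)A$ and $e^{\theta}\le(1+\tfrac1m)B$ become, with $u=\theta/m\in[1,1+\tfrac1m)$ and $v=\theta/(m+1)\in[\tfrac{m}{m+1},1)$,
\[
m\,(u-1-\ln u)\le\ln\!\big(1+\tfrac1m\big)\qquad\text{and}\qquad (m+1)\,(v-1-\ln v)\le\ln\!\big(1+\tfrac1m\big).
\]
Since $w\mapsto w-1-\ln w$ increases on $[1,\infty)$ and decreases on $(0,1]$, the first left-hand side is largest at $u=1+\tfrac1m$, where it equals $1-m\ln(1+\tfrac1m)$, and the second is largest at $v=\tfrac{m}{m+1}$, where it equals $-1+(m+1)\ln(1+\tfrac1m)$. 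So both inequalities reduce to the classical two-sided bound $(1+\tfrac1m)^{m}\le e\le(1+\tfrac1m)^{m+1}$, valid for every $m\ge1$.

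\textbf{Main obstacle.} Items (1) and (3) are clean once one spots the right substitution and, for (3), recalls the bounds $(1+1/m)^m\le e\le(1+1/m)^{m+1}$. The only genuinely delicate point is (2): $\phi$ is not concave throughout $[e,2e]$, so one cannot simply invoke concavity and must instead exploit its convex–then–concave shape, and this needs the (rather tight) numerical facts that the inflection point $e(2-\ln 2)$ lies strictly inside $(e,2e)$ and that $\psi'(e)=\tfrac34 e-2$ is positive.
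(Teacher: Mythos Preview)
Your proof is correct. Parts (1) and (2) are essentially the paper's arguments: for (1) you use convexity of $u\mapsto e^u$ (secant above graph) where the paper uses concavity of $f$ (graph above secant), which amounts to the same thing; for (2) both you and the paper set $\psi=f-L$, compute $\psi''$, locate the single inflection point, and use the sign of $\psi'(e)=\tfrac34 e-2>0$ to handle the convex piece (the paper phrases the concave piece via root-counting of $\psi'$, you via the chord, but these are interchangeable).

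Part (3) is where your argument genuinely diverges from the paper's, and it is considerably cleaner. The paper writes $e^{x/e}-(x/k)^k$ as the integral $\int_{x/e}^{k}(x/t)^t\ln\big(t/(x/e)\big)\,dt$ (for $k=m+1$ and similarly for $k=m$), then shows the integrand is monotone in $t$ by a further derivative computation that uses $m\ge 2$, and finally bounds the integral by (interval length)$\times$(endpoint value). Your route is more direct: after the substitution $\theta=x/e$ and taking logarithms, both required inequalities become $m(u-1-\ln u)\le \ln(1+\tfrac1m)$ and $(m+1)(v-1-\ln v)\le\ln(1+\tfrac1m)$, and monotonicity of $w\mapsto w-1-\ln w$ on each side of $1$ reduces these exactly to $(1+\tfrac1m)^m\le e\le(1+\tfrac1m)^{m+1}$. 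This avoids the integral representation entirely, works for all $m\ge 1$ (not just $m\ge 2$), and makes transparent why the classical two-sided bound on $e$ is what drives the estimate.
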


\begin{proof}
	We split our proof into parts for each of the statements.
	\paragraph{Part (1).} Suppose first that $x \in [0,e)$, so $f(x) = 1 + x - e^{x/e}$. Then $\frac{d^2f}{dx^2} = -e^{x/e -2}< 0$ and so $f$ is concave in this range. Hence, we have
	\begin{equation*}
		f(x) \geq f(0) + \frac{f(e) - f(0)}{e-0} x = \frac{x}{e},
	\end{equation*}
	as required.
	\paragraph{Part (2).} Suppose next that $x \in [e,2e)$, so that $f(x) = x + \frac{x^2}{4} - e^{x/e}$. We let
	\begin{equation*}
		g(x) = f(x) -  (\frac{e^2}{4} + (2 - \frac{e}{4})(x-e)) = (2e - \frac{e^2}{2}) + (-1 + \frac{e}{4})x + \frac{x^2}{4} - e^{x/e},
	\end{equation*}
	and note the following:
	\begin{align*}
		g'(x) &= (-1 + \frac{e}{4})+\frac{x}{2} - e^{x/e -1}\\
		g''(x) &= \frac{1}{2} - e^{x/e -2}\\
		g(e) &= g(2e) = 0.\\
	\end{align*}
	Clearly, $g''(x)$ is decreasing in $x$ and has a unique root at $x = e(2 - \ln(2))$. Therefore $g''(x) >0$ for $x \in [e, e(2-\ln(2)))$ and $g''(x) < 0$ for $x \in (e(2-\ln(2)),2e)$. We also note that $g'(e) = \frac{3e}{4} - 2 > 0$, $g'(e(2-\ln(2))) = -1 + \frac{3 - 2\ln(2)}{4}e >0$ and $g'(2e) = -1 + \frac{e}{4} < 0$.

	As $g''(x) <0$ for $x \in (e(2-\ln(2)),2e)$ and $g'(e(2 - \ln(2)))g'(2e) < 0$ we see that $g'(x) = 0$ has a unique root in $(e(2-\ln(2)),2e)$. In addition, $g''(x) >0$ for $x \in [e, e(2- \ln(2)))$ and $g'(e) g'(e(2 - \ln(2))) > 0$ so we see that $g'(x) = 0$ has no solutions in $[e,e(2-\ln(2))]$. Hence $g(x)$ has a unique maximum in $[e,2e)$, and no other stationary points. From this, and the fact that $g(e) = g(2e) = 0$ we deduce that $g(x) \geq 0$ for all $x \in [e,2e)$. This shows that
\begin{equation*}
f(x) \geq \frac{e^2}{4} + (2 - \frac{e}{4})(x-e)
\end{equation*}
for $x \in [e,2e)$, as claimed.

\paragraph{Part (3).} Suppose finally that $x \in [me,(m+1)e)$ for some $2 \leq m \in \mathbb{N}$. We now split into two cases: the case $\left ( \frac{x}{m} \right )^m \geq \left ( \frac{x}{m+1} \right )^{m+1}$, and the case $\left ( \frac{x}{m} \right )^m < \left ( \frac{x}{m+1} \right )^{m+1}$.

	\textbf{Case 1:} Suppose first that the former case holds. Then
	\begin{align*}
		e^{x/e} -  \min {\left \{ \left( \frac{x}{m} \right)^m, \left( \frac{x}{m+1}\right)^{m+1}\right \}} &= e^{x/e} - \left ( \frac{x}{m+1} \right )^{m+1}\\
		&= -\int_{t = x/e}^{m+1} { \left (\frac{x}{t} \right)^{t} (\ln \left (\frac{x}{t} \right) - 1 ) dt}\\
		& = \int_{t = x/e}^{m+1} { \left (\frac{x}{t} \right)^{t} \ln \left (\frac{t}{x/e} \right) dt}\\
		& \leq (m+1 - x/e) \max_{t \in [x/e,m+1]} \left \{  \left (\frac{x}{t} \right)^{t} \ln \left (\frac{t}{x/e} \right)  \right \}.
	\end{align*}
To bound $\max_{t \in [x/e,m+1]} \left \{  \left (\frac{x}{t} \right)^{t} \ln \left (\frac{t}{x/e} \right)  \right \}$ we  show the maximum is attained at $t = m+1$. Indeed, differentiating with respect to $t$ we get:
	\begin{align*}
		\frac{d}{dt} \left ( \left ( \frac{x}{t} \right )^t \ln \left( \frac{t}{x/e} \right)  \right ) &=  \left ( \frac{x}{t} \right )^t \left ( \frac{1}{t} -  \ln \left( \frac{t}{x/e} \right) ^ 2 \right )\\
		& \geq \left ( \frac{x}{t} \right )^t \left( \frac{1}{m+1} -   \left(\ln \left( \frac{m+1}{x/e} \right) \right)^ 2 \right ).
	\end{align*}
	It is a standard fact that for $y > 0$ we have $\frac{y-1}{y} \leq \ln(y) \leq y-1$. Noting that $\frac{m+1}{x/e} >0$, we apply this fact to see:
	\begin{equation*}
		\ln \left( \frac{m+1}{x/e} \right) \leq \frac{m+1}{x/e} - 1 = \frac{(m+1) - x/e}{ x/e} \leq e/x.
	\end{equation*}
	Hence, we have
	\begin{align*}
		\frac{d}{dt} \left ( \left ( \frac{x}{t} \right )^t \ln \left( \frac{t}{x/e} \right)  \right ) &\geq \left ( \frac{x}{t} \right )^t \left( \frac{1}{m+1} -   (e/x)^2 \right )\\
		&= \left ( \frac{x}{t} \right )^t  \left( \frac{(x/e)^2 - (m+1)}{(m+1)(x/e)^2} \right)\\
		&\geq \left ( \frac{x}{t} \right )^t \left( \frac{m^2 - m - 1}{(m+1) (x/e)^2} \right) \geq 0,
	\end{align*}
	where the final inequality holds since $m \geq 2$. Thus $\left ( \frac{x}{t} \right )^t \ln \left( \frac{t}{x/e} \right)$ is increasing on the interval $t \in [x/e,m+1]$, and attains its maximum at $t = m+1$. Therefore, we may bound the integral as follows:
	\begin{equation*}
		\int_{t = x/e}^{m+1} { \left (\frac{x}{t} \right)^{t} \ln \left (\frac{t}{x/e} \right) dt} \leq (m+1 - x/e) \left ( \frac{x}{m+1} \right )^{m+1} \ln \left( \frac{m+1}{x/e} \right) \leq \left ( \frac{x}{m+1} \right )^{m+1} \frac{1}{m} .
	\end{equation*}
The final inequality holds as $(m+1 - x/e) \leq 1$ and $\ln \left(\frac{m+1}{x/e} \right) \leq \frac{1}{m}$. The first of these is trivial, and the second can be seen as follows. We define $\epsilon \in [0,1)$ by $x = (m+\epsilon)e$, then
	\begin{equation*}
		\ln \left ( \frac{m+1}{x/e} \right ) = \ln \left ( \frac{m+1}{m+\epsilon} \right ) \leq \frac{1 - \epsilon}{m+\epsilon} \leq \frac{1}{m}.
	\end{equation*}	
	Hence, we have shown that
	\begin{equation*}
		e^{x/e} -  \min {\left \{ \left( \frac{x}{m} \right)^m, \left( \frac{x}{m+1}\right)^{m+1}\right \}} \leq \left ( \frac{x}{m+1} \right )^{m+1} \frac{1}{m},
	\end{equation*}
	i.e. that the claim holds in the former case.

	\textbf{Case 2:} Suppose secondly that the latter case holds. Then we have
	\begin{align*}
		e^{x/e} -  \min {\left \{ \left( \frac{x}{m} \right)^m, \left( \frac{x}{m+1}\right)^{m+1}\right \}} &= e^{x/e} - \left ( \frac{x}{m} \right )^{m}\\
		&= \int_{t = m}^{x/e} { \left (\frac{x}{t} \right)^{t} \ln \left (\frac{x/e}{t} \right) dt}\\
		& \leq (x/e - m) \max_{t \in [m,x/e]} \left \{  \left (\frac{x}{t} \right)^{t} \ln \left (\frac{x/e}{t} \right)  \right \}.
	\end{align*}
	To bound $\max_{t \in [m,x/e]} \left \{  \left (\frac{x}{t} \right)^{t} \ln \left (\frac{x/e}{t} \right)  \right \}$ we show that the maximum is attained at $t=m$. Differentiating with respect to $t$ we get:
	\begin{align*}
		\frac{d}{dt} \left(  \left( \frac{x}{t} \right)^t \ln \left( \frac{x/e}{t} \right) \right) &=  \left( \frac{x}{t} \right)^t \left ( \ln \left( \frac{x/e}{t} \right)^2 - \frac{1}{t} \right )\\
		& \leq \left( \frac{x}{t} \right)^t  \left (\left( \ln \left( \frac{x/e}{m} \right)\right)^2 - \frac{1}{x/e} \right ).
	\end{align*}
	Observe that%We bound the $\ln$ term:
	\begin{equation*}
		\ln \left( \frac{x/e}{m} \right) \leq \frac{x/e}{m} - 1 = \frac{(x/e) - m}{m} \leq \frac{1}{m}.
	\end{equation*}
	Substituting this bound into the previous equation gives
	\begin{align*}
		\frac{d}{dt} \left(  \left( \frac{x}{t} \right)^t \ln \left( \frac{x/e}{t} \right) \right) &\leq \left( \frac{x}{t} \right)^t  \left (\left( \frac{1}{m}\right)^2 - \frac{1}{x/e} \right )\\
		&=  \left( \frac{x}{t} \right)^t  \left( \frac{x/e - m^2}{m^2 (x/e)} \right)\\
		& \leq \left( \frac{x}{t} \right)^t \left( \frac{m+1 - m^2}{m^2 (x/e)} \right) \leq 0.
	\end{align*}
	(Note that the final inequality holds as $m \geq 2$.) Hence, $ \left( \frac{x}{t} \right)^t \ln \left( \frac{x/e}{t} \right)$ is non-increasing on the interval $t \in [m,x/e]$, and so attains its maximum at $t=m$. We may bound the integral as follows:
	\begin{equation*}
		\int_{t = m}^{x/e} { \left (\frac{x}{t} \right)^{t} \ln \left (\frac{x/e}{t} \right) dt} \leq  (x/e - m)  \left (\frac{x}{m} \right)^{m} \ln \left (\frac{x/e}{m} \right) \leq  \left (\frac{x}{m} \right)^{m} \frac{1}{m}.
	\end{equation*}
	(Note that the final inequality holds as $((x/e) - m) \leq 1$ and $\ln \left (\frac{x/e}{m} \right) \leq \frac{1}{m}$. The first of these is trivial, and the second can be seen as follows. We define $\epsilon \in [0,1)$ by $x = (m+ \epsilon)e$. Then
	\begin{equation*}
		\ln\left(\frac{x/e}{m}\right) = \ln\left( \frac{m+\epsilon}{m}\right) \leq \frac{\epsilon}{m} \leq \frac{1}{m}.)
	\end{equation*}
	Hence, we have shown that
	\begin{equation*}
		e^{x/e} -  \min {\left \{ \left( \frac{x}{m} \right)^m, \left( \frac{x}{m+1}\right)^{m+1}\right \}} \leq \left ( \frac{x}{m} \right )^{m} \frac{1}{m},
	\end{equation*}
i.e. that the claim holds in the latter case. This completes the proof of the claim.
\end{proof}

We now prove \propref{binomial-bound2}.
\begin{proof}[Proof of \propref{binomial-bound2}]
Fix $m \in \mathbb{N},K \in \mathbb{R}^+$ and consider $\left( \frac{K}{m+\lambda} \right)^{m+\lambda}$. Differentiating this with respect to $\lambda$ we find:
	\begin{equation*}
		\frac{d}{d\lambda} \left( \left( \frac{K}{m+\lambda} \right)^{m+\lambda}\right) = \left( \frac{K}{m+\lambda} \right)^{m+\lambda} \left(  \ln\left ( \frac{K/e}{m+\lambda}\right ) \right).
	\end{equation*}
	The only solution to $\frac{d}{d\lambda} \left( \left( \frac{K}{m+\lambda} \right)^{m+\lambda}\right)=0$ is $\lambda = \frac{K}{e} - m$.

	If $\frac{K}{e} - m <0$, then for all $\lambda \in [0,1)$ we have $\frac{K/e}{m+ \lambda} < \frac{m}{m+\lambda} \leq 1$, so the derivative is negative, and the maximum is attained by $\left(\frac{K}{m}\right)^m$, so the claim holds in this case.

	If $\frac{K}{e} - m \geq 1$, then for all $\lambda \in [0,1)$ we have $\frac{K/e}{m+ \lambda} \geq \frac{m+1}{m+\lambda} > 1$, so the derivative is positive, and the maximum is attained by $\left(\frac{K}{m+1}\right)^{m+1}$, so the claim holds in this case also.

	Finally, suppose that $\frac{K}{e} - m \in [0,1)$. Then the maximum is at $\lambda = \frac{K}{e} - m$, but we appeal to \propref{binomial-bound1} to get
	\begin{equation*}
		\left( \frac{K}{m}\right)^{m} + \left( \frac{K}{m+1}\right)^{m+1} - \left( \frac{K}{m+\lambda}\right)^{m+\lambda} = \left( \frac{K}{m}\right)^{m} + \left( \frac{K}{m+1}\right)^{m+1} - e^{K/e} = f(K) \geq 0.
	\end{equation*}
	This leaves the case $m=0$, which we resolve similarly. First, we differentiate $(K/\lambda)^{\lambda}$ with respect to $\lambda$ to get
	\begin{equation*}
		\frac{d}{d\lambda} \left( \left( \frac{K}{\lambda} \right)^{\lambda}\right) = \left( \frac{K}{\lambda} \right)^{\lambda} \left(  \ln\left ( \frac{K/e}{\lambda}\right ) \right),
	\end{equation*}	
	and note that
\begin{itemize}
\item[(1)] the derivative has a unique root at $\lambda = K/e$,
\item[(2)] the derivative is strictly positive if $\lambda < K/e$,
\item[(3)] the derivative is strictly negative if $\lambda > K/e$.
\end{itemize}
	Consequently, if $K/e \geq 1$, then $\left(\frac{K}{\lambda}\right)^{\lambda} \leq K$ for all $\lambda \in [0,1)$, so the claim holds. If $0 < K/e < 1$ then $\left(\frac{K}{\lambda}\right)^{\lambda} \leq e ^{K/e}$, so by \propref{binomial-bound1}
	\begin{equation*}
		1 + K - \left(\frac{K}{\lambda}\right)^{\lambda} \geq 1 + K - e^{K/e} = f(K) \geq 0.
	\end{equation*}
	This completes the proof.
\end{proof}

%% end paper content

\end{document}